\newtheorem{theorem}{Theorem}[section]
\newtheorem{claim}[theorem]{Claim}
\newtheorem{conjecture}[theorem]{Conjecture}
\newtheorem{lemma}[theorem]{Lemma}
\newtheorem{problem}[theorem]{Problem}
\newtheorem{proposition}[theorem]{Proposition}
\journal{Journal of Combinatorial Theory, Series A}
\begin{document}

\begin{frontmatter}

\title{The Colin de Verdi\`ere parameter, excluded minors, and the spectral radius}
\author{Michael Tait\corref{cor1}}\ead{mtait@cmu.edu}
\cortext[cor1]{Research supported by NSF grant DMS-1606350.}

\address{Department of Mathematical Sciences\\
Carnegie Mellon University\\
5000 Forbes Avenue, Pittsburgh, PA 15213, USA}
\date{}

\begin{abstract}
In this paper we characterize graphs which maximize the spectral radius of their adjacency matrix over all graphs of Colin de Verdi\`ere parameter at most $m$. We also characterize graphs of maximum spectral radius with no $H$ as a minor when $H$ is either $K_r$ or $K_{s,t}$. Interestingly, the extremal graphs match those which maximize the number of edges over all graphs with no $H$ as a minor when $r$ and $s$ are small, but not when they are larger.
\end{abstract}
\end{frontmatter}

\section{Introduction}
Let $H$ and $G$ be simple graphs. $H$ is a {\em minor} of $G$ if $H$ can be obtained from a subgraph of $G$ by contracting edges. Properties of graphs with excluded minors have been studied extensively. In particular, Mader \cite{Mader} proved that for every graph $H$ there is a constant $C$ such that if $G$ does not contain $H$ as a minor, then $|E(G)| \leq C|V(G)|$. Determining this constant $C$ seems to be a very difficult question for general $H$.

When $H$ is either a complete graph or a complete bipartite graph, there are natural constructions of graphs $G$ which do not contain $H$ as a minor. A complete graph on $r-2$ vertices joined to an independent set of size $n-r+2$ is an $n$ vertex graph with $(r-2)(n-r+2) + \frac{1}{2}(r-2)(r-3)$ edges which does not contain $K_r$ as a minor. A complete graph on $s-1$ vertices joined to $(n-s+1)/t$ disjoint copies of $K_t$ is an $n$-vertex graph with $\frac{1}{2}(t+2s-3)(n-s+1) + \frac{1}{2}(s-1)(s-2)$ edges which does not contain $K_{s,t}$ as a minor. 

Mader \cite{Mader2} showed that this construction yields the maximum number of edges over all $n$-vertex graphs with no $K_r$ minor when $r\leq 7$. Surprisingly, this natural construction is not best possible when $r > 7$. Indeed, Kostochka and Thomason \cite{Kostochka1, Kostochka2, Thomason1, Thomason2} independently showed that the maximum number of edges in a $K_r$ minor-free graph is $\Theta(r(\log r)^{1/2} n)$ for large $r$. Similarly, Chudnovsky, Reed, and Seymour \cite{ChudnovskyReedSeymour} showed that the graph on $n$ vertices with no $K_{2,t}$ minor with the maximum number of edges is given by this natural construction (see also \cite{Myers}). Kostochka and Prince \cite{KostochkaPrince} showed that the same is true when $s=3$ and $t$ is large enough. It is unknown when $s \in \{4,5\}$ and Kostochka and Prince \cite{KostochkaPrince} showed that this construction is not best possible when $s\geq 6$.

In this paper we discuss a related question. If $G$ is an $n$-vertex graph with no $H$ minor, and $\lambda$ is the spectral radius of its adjacency matrix, how large can $\lambda$ be? We show that the natural $K_r$ and $K_{s,t}$ minor-free graphs described above are extremal for all values of $r$ and $s\leq t$. It is interesting that the extremal graphs for maximizing number of edges and spectral radius are the same for small values of $r$ and $s$ and then differ significantly. We also consider maximizing the spectral radius over the family of $n$-vertex graphs of Colin de Verdi\`ere parameter at most $m$. Our main theorems are the following.

\begin{theorem}\label{CdV}
Let $m\geq 2$ be an integer. For $n$ large enough, the $n$-vertex graph of maximum spectral radius with Colin de Verdi\`ere parameter at most $m$ is the join of $K_{m-1}$ and a path of length $n-m+1$.
\end{theorem}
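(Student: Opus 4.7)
The plan is to follow the standard spectral extremal program: verify the candidate graph is admissible, derive a sharp lower bound on its spectral radius, then show any extremal graph shares its structure. First, iterating the apex property $\mu(K_1\vee H)=\mu(H)+1$ starting from $\mu(P_{n-m+1})=1$ gives $\mu(K_{m-1}\vee P_{n-m+1})=m$, so the graph is admissible. By symmetry the Perron equations reduce to a small linear system (the $m-1$ clique vertices share a common Perron entry, and interior path vertices satisfy a linear recurrence), yielding the expansion $\lambda_1(K_{m-1}\vee P_{n-m+1}) = \sqrt{(m-1)n}+O_m(1)$.

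Now let $G$ be extremal, with Perron eigenvalue $\lambda$ and Perron eigenvector $\mathbf{x}$ normalized by $\|\mathbf{x}\|_\infty=1$. Minor-monotonicity of $\mu$ together with $\mu(K_{m+2})=m+1$ imply that $G$ is $K_{m+2}$-minor-free, so $e(G)=O_m(n)$ by Mader (or Kostochka-Thomason for large $m$). Combined with $\lambda\ge\sqrt{(m-1)n}+O_m(1)$, standard eigenvector concentration arguments of the Nikiforov type force the Perron weight to concentrate: there is a set $L$ of exactly $m-1$ vertices with $x_v=1-o(1)$ for $v\in L$, while $x_v = O(n^{-1/2})$ for $v\notin L$.

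The crux is to promote $L$ to a clique joined to all of $V(G)\setminus L$. If some pair $\{u,v\}$ with $u\in L$ and $v\in V(G)\setminus\{u\}$ is a non-edge, then $x_u x_v$ is bounded away from zero; on the other hand, because $e(G)=O_m(n)$ while $V\setminus L$ has size close to $n$, a counting argument produces an edge $e'=ab$ of $G$ with $x_a x_b = O(n^{-1})$. Deleting $e'$ and inserting $uv$ changes $\lambda$ by at least $2(x_u x_v - x_a x_b)/\|\mathbf{x}\|_2^2>0$; one then verifies the swap preserves $\mu\le m$, either because the modified graph is a subgraph of $K_{m-1}\vee H'$ for an admissible $H'$, or by checking that no new $K_{m+2}$-minor arises. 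This contradicts extremality unless the full join structure is present, so $G=K_{m-1}\vee H$ for some $H=G[V\setminus L]$.

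Finally, $\mu(K_{m-1}\vee H)=\mu(H)+m-1 \le m$ forces $\mu(H)\le 1$, so $H$ is a disjoint union of paths and isolated vertices. Among all such $H$, a single path on $n-m+1$ vertices strictly maximizes $\lambda_1(K_{m-1}\vee H)$, since concatenating two components by joining endpoints preserves $\mu(H)\le 1$ while adding an edge and hence increasing the spectral radius. I expect the main obstacle to be the switching argument in the third paragraph: preserving the Colin de Verdi\`ere parameter under local edge modifications is noticeably more delicate than preserving $K_r$-minor-freeness, since $\mu$ is a spectral invariant of a family of weighted Laplacians rather than a purely combinatorial minor-closed property, and one must choose swaps that are known to keep $\mu\le m$ at every step.
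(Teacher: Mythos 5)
Your overall program (concentration of the Perron vector on $m-1$ vertices, promotion to a join, then classifying the rest) is the same as the paper's, and your endgame is a pleasant variant: instead of excluding degree-$3$ vertices via $K_{1,3}\vee K_{m-1}$ and cycles via a $K_{m+2}$ minor, you invert the join formula to get $\mu(H)\le 1$ directly, which is fine (modulo the trivial case where $H$ is edgeless, and noting the equality in Theorem \ref{join theorem} needs an edge). But the step you yourself flag as the main obstacle is a genuine gap, and it is exactly where the paper does its real work. Your swap argument deletes an edge $ab$ and inserts a missing edge $uv$ with $u\in L$, and you propose to justify admissibility ``by checking that no new $K_{m+2}$-minor arises.'' That check is insufficient: $K_{m+2}$-minor-freeness does not capture $\mu\le m$. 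Already for $m=3$, inserting an edge can create a $K_{3,3}$ minor (so $\mu=4$) without creating any $K_5$ minor; in general the minimal excluded-minor list for $\mu\le m$ contains graphs other than $K_{m+2}$ (e.g.\ the Petersen family for $m=4$), and $\mu$ can increase under edge addition even when no complete minor appears. Your other fallback---that the modified graph is a subgraph of $K_{m-1}\vee H'$ for admissible $H'$---presupposes the join structure you are trying to establish. The paper closes precisely this hole with two mechanisms you do not have: Theorem \ref{K_s,t clique}, which works for \emph{any} family with finitely many excluded minors (and $\mu\le m$ is such a family) by first deleting a bounded set of edges so that each missing pair $x,y\in K$ acquires a private degree-$2$ common neighbor $b_{xy}$, after which adding $xy$ provably creates no new excluded minor; and, for absorbing the leftover vertices $R$, deleting all edges at a low-degree vertex and attaching it to the already-completed clique $K$ of size $m-1$, which is an $(m-1)$-clique sum and hence keeps $\mu\le m$. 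Without some replacement for these two facts your exchange step does not go through.

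A smaller internal inconsistency: you assert both that $\mathbf{x}_v=O(n^{-1/2})$ for $v\notin L$ and that for a non-edge $\{u,v\}$ with $u\in L$ the product $\mathbf{x}_u\mathbf{x}_v$ is ``bounded away from zero''; these contradict each other when $v\notin L$. The comparison can still be made to work quantitatively, but only after proving a matching \emph{lower} bound $\mathbf{x}_v=\Omega(n^{-1/2})$ for every vertex (the analogue of Claim \ref{lower bound eigenvector entry}, proved there by a leaf-reattachment argument), and after exhibiting an edge $ab$ with $\mathbf{x}_a\mathbf{x}_b$ of strictly smaller order whose deletion is compatible with the admissibility argument above; neither is supplied in the sketch.
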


\begin{theorem}\label{K_r}
Let $r\geq 3$. For $n$ large enough, the $n$-vertex graph with no $K_r$ minor of maximum spectral radius is the join of $K_{r-2}$ and an independent set of size $n-r+2$.
\end{theorem}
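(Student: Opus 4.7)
The plan is to combine a spectral lower bound from the candidate graph $G^* := K_{r-2} \vee \overline{K_{n-r+2}}$ with an edge-count upper bound from Kostochka--Thomason, then extract structure from the Perron eigenvector of an extremal graph, and finally deploy local modifications to pin down the extremal graph exactly. First, $G^*$ is $K_r$-minor-free (any $K_r$ minor would require $r$ pairwise connected branch sets, but at most $r-2$ can use apex vertices, and two branch sets lying entirely in the independent-set side admit no edges between them), and a quotient-matrix calculation gives
\[
\lambda(G^*) = \tfrac{1}{2}\bigl((r-3) + \sqrt{(r-3)^2 + 4(r-2)(n-r+2)}\bigr) = (1+o(1))\sqrt{(r-2)n}.
\]
On the other hand, by the Kostochka--Thomason bound cited in the introduction, any $n$-vertex $K_r$-minor-free graph $G$ has $|E(G)| \le C_r n$, so $\lambda(G) \le \sqrt{2C_r n}$. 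Hence if $G$ is extremal and $\lambda := \lambda(G) \ge \lambda(G^*)$, then $\lambda = \Theta(\sqrt n)$ with leading constant $\sqrt{r-2}$.

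Next, I would analyze the Perron eigenvector $\mathbf{x}$ of $G$, normalized so $\|\mathbf{x}\|_2 = 1$. From $\lambda x_v = \sum_{w \sim v} x_w$ and Cauchy--Schwarz, $x_v \le \sqrt{d(v)}/\lambda$, so the set $L := \{v : x_v \ge c_r\}$ of \emph{heavy} vertices (for a suitable constant $c_r > 0$) has bounded size, and every vertex of $L$ has degree $\Omega(n)$. Splitting the quadratic form $\lambda = \mathbf{x}^\top A \mathbf{x}$ into heavy/light contributions and applying Kostochka--Thomason on the light subgraph forces the heavy--heavy plus heavy--light term to be $(1-o(1))\sqrt{(r-2)n}$. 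Comparing with the quadratic form of $G^*$ (which attains this leading order with exactly $r-2$ heavy vertices) and using the structural fact that any $r-1$ vertices of linear degree in a $K_r$-minor-free graph share $\Omega(n)$ common neighbors and thereby force a $K_r$ minor, I would conclude $|L| = r-2$ exactly, each vertex of $L$ has degree $(1-o(1))n$, and each $v \in S := V(G) \setminus L$ satisfies $x_v = O(1/\sqrt n)$.

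Finally, I would deduce the exact structure by local modifications. If two vertices $u,v \in S$ were adjacent, then since each of $u, v$ misses only $o(n)$ vertices of $L$, they share all $r-2$ common neighbors in $L$, producing $K_r$ as a subgraph---a contradiction; hence $S$ is independent. An analogous argument using linear common neighbors forces $L$ to be a clique. Lastly, if some $v \in S$ is not joined to all of $L$, replacing $N(v)$ by $L$ yields a graph that remains a subgraph of $G^*$, hence $K_r$-minor-free, and with strictly larger spectral radius by a Perron-entry comparison (entries on $L$ strictly exceed those on $S$), contradicting extremality; hence $G = G^*$. The hardest part will be the sharp count $|L| = r-2$: crude spectral arguments yield only $|L| = O(1)$, and pinning down the correct value demands combining the precise leading constant $\sqrt{r-2}$ of $\lambda$ with the minor-theoretic obstruction that rules out $r-1$ linear-degree vertices, together with enough quantitative control on the light eigenvector entries to make the switching step in the last paragraph actually produce a strict increase.
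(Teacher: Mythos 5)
Your outline follows the same skeleton as the paper (spectral lower bound from the candidate, heavy/light eigenvector split, identify $r-2$ dominant vertices, finish by local switching), but the three load-bearing steps are not actually supplied, and the quantitative input you cite is too weak to carry them. The only upper bound you use is the Kostochka--Thomason bound $e(G)\le C_r n$ with $C_r=\Theta(r\sqrt{\log r})$, which gives $\lambda\le\sqrt{2C_r n}$; this does \emph{not} yield the ``precise leading constant $\sqrt{r-2}$'' that your plan for $|L|=r-2$ explicitly leans on, and with a constant-threshold heavy set under the $2$-norm normalization the light--light part of the quadratic form is not negligible either. The sharp constant in the paper comes from a different tool: the bipartite refinement (Lemma~\ref{K_r bipartite bound}, following Thomason) that a $K_r$-minor-free bipartite graph with one part of size $o(n)$ has at most $(r-2+o(1))n$ edges. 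Combined with $|L|=O(\sqrt n)$ this bounds $e(S,L)$, and it is exactly this bound, fed into the $\lambda^2$-identity, that lets one extract $r-2$ vertices of eigenvector entry $1-o(1)$ and degree $(1-o(1))n$ one at a time (Lemma~\ref{more vertices of large degree}). Without it your argument cannot tell $r-2$ heavy vertices from $10r$ of them. Moreover, the ``structural fact'' you invoke --- that any $r-1$ vertices of linear degree in a $K_r$-minor-free graph share $\Omega(n)$ common neighbors --- is false as stated (vertices of degree $cn$ with small $c$ can have disjoint neighborhoods); it holds only for degrees $(1-o(1))n$, and upgrading $x_v\ge c_r$ (which gives only $d(v)=\Omega(n)$) to degree $(1-o(1))n$ requires the paper's separate machinery: a lower bound on \emph{every} eigenvector entry proved by an edge-rewiring comparison, then Lemma~\ref{lemma non-neighbors in S}.

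The endgame also has two genuine gaps. First, ``an analogous argument forces $L$ to be a clique'' is not an argument: two non-adjacent heavy vertices do not by themselves create a $K_r$ minor; one must add the missing edges and prove the resulting graph is still $K_r$-minor-free before invoking extremality, and that is precisely the nontrivial content of the paper's Theorem~\ref{K_r clique} (independence of $T$, each outside component meeting $T$ in at most one vertex, and a re-selection of branch sets). Second, your proof that $S$ is independent assumes that both endpoints of an edge in $S$ are adjacent to all of $L$, which is not known at that stage; the problematic vertices are exactly those outside the common neighborhood of $L$ (the set $R$ in the paper), and eliminating them requires a further rewiring argument with quantitative control of the light eigenvector entries (Lemmas~\ref{upper bound small entries} and~\ref{B empty}), after which independence of the remaining vertices and the final comparison with $K_{r-2}$ joined to an independent set go through. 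As written, the proposal identifies the right milestones but leaves each of them unproved.
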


\begin{theorem}\label{K_s,t}
Let $t\geq s\geq 2$. For $n$ large enough, if $G$ is an $n$-vertex graph with no $K_{s,t}$ minor and $\lambda$ is the spectral radius of its adjacency matrix, then 
\[
\lambda \leq \frac{s+t-3 + \sqrt{(s+t-3)^2 + 4((s-1)(n-s+1) - (s-2)(t-1))}}{2}
\]
with equality if and only if $n \equiv s-1 \pmod{t}$ and $G$ is the join of $K_{s-1}$ with a disjoint union of copies of $K_t$.
\end{theorem}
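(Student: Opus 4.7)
My plan is to use the standard spectral-extremal recipe: pick an extremal graph $G$, use its Perron eigenvector to identify a nearly-universal core of $s-1$ vertices, upgrade ``nearly universal'' to ``fully universal'' via an edge-swap, and then analyze the structure on the remainder.

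\emph{Setup and lower bound.} Let $G$ be an $n$-vertex $K_{s,t}$-minor-free graph with spectral radius $\lambda$ and Perron eigenvector $x$ normalized so $\|x\|_\infty = 1$; let $z$ attain the maximum. Solving the $2\times 2$ quotient matrix of the candidate graph $K_{s-1} \vee \frac{n-s+1}{t} K_t$ recovers the exact eigenvalue $\lambda^\ast$ in the displayed formula, satisfying $\lambda^\ast = (1+o(1))\sqrt{(s-1)n}$, so extremality forces $\lambda \geq \lambda^\ast$. Throughout I use the basic minor-free constraint: any $s$ distinct vertices share at most $t-1$ common neighbors, since otherwise $K_{s,t}$ embeds as a subgraph.

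\emph{Locating the core.} Let $A$ consist of the $s-1$ vertices of largest $x$-weight. A bootstrap using $\lambda = \sum_{u \sim z} x_u$, the identity $\lambda^2 x_z = \sum_w |N(z)\cap N(w)| x_w$, and the lower bound on $\lambda$ shows every $v \in A$ satisfies $x_v = 1-o(1)$ and $d(v) = n - o(n)$; too much $x$-mass lying outside $A$ would either violate the Perron relation at $z$ or exhibit $s$ high-weight vertices with too many common neighbors. Then I upgrade ``nearly universal'' to ``fully universal'': if $v \in A$ and $u \notin A$ with $v \not\sim u$, select a neighbor $w$ of $v$ with $x_w$ and $|N(v)\cap N(w)|$ both small and perform the swap $vw \mapsto vu$. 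Because $x_u > x_w$ and $x_v$ is close to $1$, the Rayleigh quotient strictly increases, and the careful choice of $w$ guarantees that no new $K_{s,t}$ minor is created, contradicting extremality. This is the main obstacle of the proof and is where the hypothesis ``$n$ large enough'' is essential.

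\emph{Structure on the remainder.} With $A$ a set of $s-1$ universal vertices, let $B = V(G) \setminus A$. Applying the codegree constraint to the $s-1$ vertices of $A$ together with any $b \in B$ gives $|N(b) \cap B| \leq t-1$, so every component of $G[B]$ has at most $t$ vertices. Extremality then forces each component to be a clique, since adding a missing edge inside such a small component creates no $K_{s,t}$ minor yet strictly increases $\lambda$. Finally, among graphs $K_{s-1} \vee H$ with $H$ a disjoint union of cliques of size at most $t$ on $n-s+1$ vertices, the spectral radius is strictly maximized when every clique has size exactly $t$: merging two cliques of sizes $a \leq b < t$ into one of size $\min(a+b,t)$ plus a possibly smaller leftover strictly increases $\lambda$ by an eigenvector-weighting argument. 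This forces $n - s + 1 \equiv 0 \pmod t$ at equality and completes the characterization.
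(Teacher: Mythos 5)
Your overall recipe (Perron eigenvector, near-universal core of $s-1$ vertices, local edge swaps, then analyze the remainder) matches the paper's strategy in spirit, but two of your key steps have genuine gaps. First, the upgrade from ``nearly universal'' to ``fully universal'' via the swap $vw \mapsto vu$ is not justified: deleting one unrelated edge $vw$ in no way prevents the added edge $vu$ from creating a $K_{s,t}$ minor, and the Rayleigh gain requires $x_u > x_w$, which your choice of $w$ (small $x_w$) does not guarantee, since $u$ may have even smaller weight. This is exactly the crux of the problem, and the paper spends real machinery on it: a structural theorem (Theorem \ref{K_s,t clique}) showing one can delete a bounded set of low-weight edges and then complete $K$ to a clique without creating a forbidden minor, followed by a rewiring that deletes \emph{all} edges at a low-weight vertex of $R$ and attaches it to the clique $K$ --- an operation that is minor-safe because attaching a vertex to an $(s-1)$-clique cannot create a $K_{s,t}$ minor, and whose Rayleigh gain is anchored at the low-weight vertex rather than needing $x_u > x_w$.

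Second, your claim that $|N(b)\cap B|\le t-1$ forces every component of $G[B]$ to have at most $t$ vertices is false: maximum degree $t-1$ does not bound component size (a long path has maximum degree $2$). Everything after that point (components are cliques, the merging argument) rests on this false step, and in addition the merging argument alone never establishes the displayed closed-form inequality when $t \nmid n-s+1$, nor the strictness needed for the equality characterization. The paper closes both holes with one tool you are missing: a quotient-type \emph{upper} bound (Theorem \ref{interlacing}) showing that the join of $K_{s-1}$ with any graph of maximum degree $t-1$ has spectral radius at most $\lambda_1\begin{psmallmatrix} s-2 & n-s+1\\ s-1 & t-1\end{psmallmatrix}$, with equality iff the remainder is $(t-1)$-regular; it then rules out $(t-1)$-regular components on $t+1$ vertices by a codegree argument and on $\ge t+2$ vertices by the Chudnovsky--Reed--Seymour edge bound for $K_{1,t}$-minor-free graphs (the remainder is $K_{1,t}$-minor-free because it is dominated by $K$). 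You need either this interlacing lemma or a substitute for it; as written your proposal does not prove the stated bound, let alone the equality case. (A small further point: your lower bound $\lambda \ge \lambda^\ast$ presupposes $t \mid n-s+1$; for general $n$ one should instead use, e.g., $K_{s-1,n-s+1}$ to get $\lambda = \Omega(\sqrt{n})$.)
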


The Colin de Verdi\`ere parameter of a graph, denoted by $\mu(G)$, was introduced in 1990 \cite{Colin1} motivated by applications in differential geometry. This parameter turned out to have many nice graph-theoretic properties, for instance it is minor-monotone. Further, much work has been done relating $\mu$ to other graph parameters  (for a nice survey, see \cite{barioli survey}), and it can be used to give an algebraic description of certain topological properties of a graph (proved by Colin de Verdi\`ere \cite{Colin1}, Robertson, Seymour, and Thomas \cite{RST}, and Lov\'asz and Schrijver \cite{LS}):

\begin{enumerate}[(i)]
\item $\mu(G) \leq 1$ if and only if $G$ is the disjoint union of paths.
\item $\mu(G) \leq 2$ if and only if $G$ is outerplanar.
\item $\mu(G) \leq 3$ if and only if $G$ is planar.
\item $\mu(G) \leq 4$ if and only if $G$ is linklessly embeddable.
\end{enumerate}

Our theorems build on work of Tait and Tobin \cite{TaitTobin}, of Nikiforov \cite{Nikiforov}, and of Hong \cite{Hong}. In \cite{TaitTobin}, planar and outerplanar graphs of maximum spectral radius were characterized for $n$ large enough. By the above characterization, Theorem \ref{CdV} is a far-reaching generalization of the main results of \cite{TaitTobin}. In \cite{Nikiforov} graphs with no $K_{2,t}$ minor of maximum spectral radius were characterized and in \cite{Hong} graphs with no $K_5$ minor of maximum spectral radius were characterized. Theorems \ref{K_r} and \ref{K_s,t} extend these results to all $r\geq 3$ and $2\leq s\leq t$. In \cite{Hong} the maximum spectral radius of all graphs with a fixed tree-width was determined, and Theorem \ref{K_r} shows that the extremal graph is the same as for the family of graphs with no $K_r$ minor. We also mention the following open problem, which would be implied by a solution to a conjecture of Nevo \cite{Nevo}:

\begin{problem}
Show that if $\mu(G) \leq m$, then $e(G) \leq mn - \binom{m+1}{2}$.
\end{problem}

Finally, we note that finding the graph in a given family of graphs maximizing some function of its eigenvalues has a long history in extremal graph theory (for example Stanley's bound \cite{Stanley}, the Alon-Boppana theorem \cite{AlonBoppana}, the Hoffman ratio bound \cite{Hoffman}). In particular, in some cases theorems of this type can strengthen classical extremal results, for example Tur\'an's theorem \cite{NikiforovTuran} or the K\H{o}vari-S\'os-Tur\'an theorem \cite{BabaiGuiduli, NikiforovKST}.

\subsection{Notation, definitions, and outline}
If $G$ is a graph, $e(G)$ will denote the number of edges in $G$. $A(G)$ will denote the adjacency matrix of $G$ and $\lambda_1(A(G))$ or $\lambda_1(G)$ will denote the largest eigenvalue of this matrix. For two sets $S,L \subset V(G)$, $E(S)$ will denote the edges with both endpoints in $S$ and $E(S,L)$ will denote edges with one endpoint in $S$ and one in $L$. We will often use that $n$ is large enough and that if $G$ is a connected graph, then the eigenvector corresponding to $\lambda_1(G)$ has all positive entries. This fact and the Rayleigh quotient imply that if $H$ is a strict subgraph of $G$, then $\lambda_1(H) < \lambda_1(G)$.

Given a matrix $M$, define the {\em corank} of $M$ to be the dimension of its kernel. If $G$ is an $n$-vertex graph, then the {\em Colin de Verdi\`ere parameter} of $G$, denoted by $\mu(G)$, is defined to be the largest corank of any $n\times n$ matrix $M$ such that:

\begin{enumerate}[M1]
\item If $i\not=j$ then $M_{ij} < 0$ if $i\sim j$ and $M_{ij} = 0$ if $i\not\sim j$.
\item $M$ has exactly one negative eigenvalue of multiplicity $1$.
\item There is no nonzero matrix $X$ such that $MX = 0$ and $X_{ij}  = 0$ whenever $i=j$ or $M_{ij} \not=0$.
\end{enumerate}

A nice discussion of where these seemingly {\em ad hoc} conditions come from is given in \cite{survey}. Two key properties that we will use (see \cite{survey}) are that if $\mu(G) \leq m$ then there is a finite family of minors that $G$ does not contain and there is a constant $c_m$ such that $e(G) \leq c_m n $.

In section \ref{structural section} we prove structural results about graphs with excluded minors which we will need during the proof of the main theorems. These results are relatively specific, but may be of independent interest. In section \ref{section CdV} we prove Theorem \ref{CdV}. In section \ref{section K_r} we prove Theorem \ref{K_r} and in section \ref{section K_s,t} we prove Theorem \ref{K_s,t}.

\section{Structural results for graphs with excluded minors}\label{structural section}
We need several structural results for graphs which do not contain a fixed minor:

\begin{lemma}\label{K_s,t bipartite bound}
Let $G$ be a bipartite $n$ vertex graph with no $K_{s,t}$ minor and vertex partition $K$ and $T$. Let $|K| = k$ and $|T| = n-k$. Then there is a constant $C$ depending only on $s$ and $t$ such that 
\[
e(G) \leq Ck + (s-1)n.
\]
In particular, if $|K| = o(n)$, then $e(G) \leq (s-1 + o(1))n$.
\end{lemma}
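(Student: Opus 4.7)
The plan is to separate the edges by a degree threshold on $T$. Set $T^* = \{v\in T:\,d_G(v)\ge s\}$ and $T_0=T\setminus T^*$. Writing $e(G)=\sum_{v\in T}d_G(v)$, the contribution of $T_0$ is at most $(s-1)|T_0|\le(s-1)n$, so it suffices to bound the edges of $G^* := G[K\cup T^*]$. The graph $G^*$ is bipartite, is still $K_{s,t}$-minor-free, and every $v\in T^*$ has at least $s$ neighbours in $K$.

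The strategy for showing $e(G^*) \le Ck$ has two ingredients. Apply the Mader-type density bound for $K_{s,t}$-minor-free graphs to $G^*$ to obtain $e(G^*)\le C_M(s,t)(k+|T^*|)$; this immediately reduces everything to the key inequality
\[
|T^*| \le C_1(s,t)\cdot k.
\]
Once this is in hand, $e(G^*)\le C_M(1+C_1)k$, and the lemma follows with $C:=C_M(1+C_1)$. The "in particular" clause is then automatic, since $k=o(n)$ forces $Ck=o(n)$.

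I would prove $|T^*|\le C_1 k$ by contrapositive: assuming $|T^*|$ is much larger than $k$, I construct a $K_{s,t}$ minor in $G$. For each $v\in T^*$ fix an $s$-subset $S_v\subseteq N_{G^*}(v)\subseteq K$. The Erd\H{o}s--Rado sunflower lemma applied to $\{S_v\}_{v\in T^*}$ produces, as soon as $|T^*|$ exceeds an absolute constant, a $t$-sunflower: indices $v_1,\ldots,v_t$ with $S_{v_j}=Y\sqcup P_j$ for a common core $Y\subseteq K$ of size $y\le s$ and pairwise disjoint petals $P_j$ of size $s-y$. If $y=s$ then $\{v_1,\ldots,v_t\}$ together with $Y$ already form $K_{s,t}$ as a subgraph. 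Otherwise I set $B_j=\{v_j\}$ on the $t$-side of the would-be minor, use the $y$ singletons of $Y$ as the first $y$ branch sets on the $s$-side, and finally build $s-y$ additional connected branch sets $A_{y+1},\ldots,A_s$, each containing exactly one vertex from every petal $P_j$ and tied together through suitable connector vertices drawn from $T^*\setminus\{v_1,\ldots,v_t\}$.

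The main obstacle is precisely this connector step: carving out $s-y$ pairwise disjoint connected branch sets that are also disjoint from $Y$ and from the $v_j$'s. A single application of the sunflower lemma only requires $|T^*|$ larger than an absolute constant, whereas to find enough disjoint connectors simultaneously we need $|T^*|$ linear in $k$. The idea is to iterate: after committing to the sunflower I delete the used vertices and work in the residual subgraph of $G^*$, which is still bipartite, still $K_{s,t}$-minor-free, and in which every remaining $v\in T^*$ still has minimum degree at least $s$ into the shrunken $K$-side (since at most $s$ vertices of $K$ were removed). Repeated sunflower extractions, coupled with a greedy selection of connectors using the minimum-degree assumption to guarantee a fresh candidate at each step, eventually realise all $s-y$ connected branch sets, provided $C_1(s,t)$ has been chosen large enough to sustain the iteration. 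The resulting $K_{s,t}$ minor contradicts $K_{s,t}$-minor-freeness, yielding $|T^*|\le C_1 k$ and, together with the Mader step, the desired bound $e(G)\le Ck+(s-1)n$.
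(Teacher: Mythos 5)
Your reduction is fine but it is also the easy part: splitting off the $T$-vertices of degree at most $s-1$ costs $(s-1)n$, and the Mader-type bound $e(G^*)\le C_M(s,t)(k+|T^*|)$ is standard. The trouble is that your ``key inequality'' $|T^*|\le C_1(s,t)k$ is, in view of that same density bound, equivalent (up to constants) to the lemma itself --- a bipartite $K_{s,t}$-minor-free graph whose $T$-side has minimum degree $s$ satisfies $e(G)\ge s|T^*|$, so bounding $|T^*|$ by $O(k)$ and bounding $e(G)$ by $O(k)+(s-1)n$ are the same statement. So all of the content of the lemma has been pushed into the step you describe only loosely, and that step is where the proposal breaks down. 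The sunflower extraction is available as soon as $|T^*|$ exceeds a constant depending on $s$ and $t$, and by itself it cannot produce a $K_{s,t}$ minor: take $G$ to be a disjoint union of copies of $K_{s,t-1}$ (which is $K_{s,t}$-minor-free, with every $T$-vertex of degree exactly $s$). There you get arbitrarily many sets $S_v$ forming a sunflower with empty core, but the petals lie in different components, so no connected branch set can meet more than one petal. This shows that minimum degree $s$ into $K$ gives no cross-petal connectivity whatsoever, which is exactly what your connector step needs.

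Concretely, to finish you must build $s-y$ pairwise disjoint \emph{connected} sets, each meeting all $t$ petals, and your proposal offers only ``greedy selection of connectors using the minimum-degree assumption'' inside an unspecified iteration. Minimum degree does not supply paths between prescribed vertices of $K$; moreover, in a bipartite graph a path between two petal vertices alternates sides, so your connectors cannot be drawn from $T^*$ alone --- they consume additional $K$-vertices, which must also avoid $Y$, the other petals, and the other branch sets, and nothing in the sketch controls this. What is really needed is an argument that when $|T^*|/k$ is large one can pass to a piece of $G^*$ that is dense \emph{relative to its $K$-side} and sufficiently highly connected (or linked) to route the $s-y$ disjoint trees through the petals; identifying and exploiting such a piece is the genuine content of the result, and it is missing here. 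This is precisely why the paper does not reprove the statement but cites Thomason (European J.\ Combin.\ 28 (2007), Theorem 2.2), where the bipartite minor extremal function is established; as written, your proposal re-poses that theorem rather than proving it, so the gap is real. If you want an honest self-contained proof, either follow Thomason's argument or find a replacement for the connector step (for instance, a reduction to a highly connected subgraph followed by a linkage argument), but be aware that this is a substantial piece of work, not a routine greedy step.
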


This lemma has been proved several times in the literature, for example \cite{Thomason3} Theorem 2.2. The next lemma follows from it, since a $K_{r-1,r-1}$ minor contains a $K_r$ minor.





\begin{lemma}\label{K_r bipartite bound}
Let $G$ be a bipartite $K_r$ minor-free graph on $n$ vertices with vertex partition $K$ and $T$. Let $|K| = k$ and $|T| = n-k$. Then there is an absolute constant $C$ depending only on $r$ such that
\[
e(G) \leq Ck + (r-2)n.
\]
In particular, if $|K| = o(n)$, then $e(G) \leq (r-2+o(1))n$.
\end{lemma}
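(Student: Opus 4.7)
The plan is to reduce directly to Lemma \ref{K_s,t bipartite bound} with $s = t = r-1$. The key observation, already flagged in the paragraph preceding the lemma, is that $K_{r-1,r-1}$ contains $K_r$ as a minor; taking the contrapositive, any $K_r$-minor-free graph is automatically $K_{r-1,r-1}$-minor-free. Applying Lemma \ref{K_s,t bipartite bound} to $G$ with these parameters yields $e(G) \leq C'k + (s-1)n = C'k + (r-2)n$ for a constant $C'$ depending only on $r$ (which is the same as depending only on $s$ and $t$, since $s=t=r-1$). This is exactly the desired bound, and the ``in particular'' asymptotic statement follows immediately by dividing through by $n$.

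The only thing that requires a brief justification is the minor containment $K_r \preceq K_{r-1,r-1}$. I would exhibit an explicit model: label the two sides $\{a_1,\ldots,a_{r-1}\}$ and $\{b_1,\ldots,b_{r-1}\}$, and take as the $r$ branch sets the edges $\{a_1,b_1\}, \{a_2,b_2\}, \ldots, \{a_{r-2},b_{r-2}\}$ together with the two singletons $\{a_{r-1}\}$ and $\{b_{r-1}\}$. Each branch set is connected. For any two of the two-vertex branch sets $\{a_i,b_i\}$ and $\{a_j,b_j\}$ with $i\ne j$, the cross edge $a_ib_j$ connects them; between $\{a_i,b_i\}$ and $\{a_{r-1}\}$ the edge $a_{r-1}b_i$ works; between $\{a_i,b_i\}$ and $\{b_{r-1}\}$ we use $a_ib_{r-1}$; finally the singletons are joined by $a_{r-1}b_{r-1}$. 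All $\binom{r}{2}$ pairs are adjacent, giving the required $K_r$ minor.

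There is no real obstacle: this lemma is an immediate corollary of Lemma \ref{K_s,t bipartite bound}, with the minor-containment verification being the only nontrivial (but routine) step. I would not re-prove anything from scratch, since redoing the Thomason-type argument of Lemma \ref{K_s,t bipartite bound} in the special case $s=t=r-1$ would add nothing.
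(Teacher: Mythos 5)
Your proposal is correct and matches the paper's own (one-line) derivation: the paper also obtains this lemma by applying Lemma \ref{K_s,t bipartite bound} with $s=t=r-1$, using the fact that a $K_{r-1,r-1}$ minor contains a $K_r$ minor, so a $K_r$-minor-free graph is $K_{r-1,r-1}$-minor-free. Your explicit branch-set verification of $K_r \preceq K_{r-1,r-1}$ is a fine (and correct) addition that the paper leaves implicit.
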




If a graph is linklessly embeddable, it does not contain $K_{4,4}$ as a minor. Hence, Lemma \ref{K_s,t bipartite bound} implies that a bipartite linklessly embeddable graph with $o(n)$ vertices in one partite set has at most $(3+o(1))n$ edges. The following problem is open, and would be implied by a solution to Conjecture 4.5 in \cite{bipartite rigidity}:

\begin{problem}
If $G$ is a bipartite linkessly embeddable graph, show that $e(G) \leq 3n-9$.
\end{problem}

\begin{theorem}\label{K_r clique}
Let $G$ be a graph on $n$ vertices with no $K_r$ minor. Assume that $(1-2\delta)n > r$, and $(1-\delta)n > \binom{r-2}{2}+2$, and that there is a set $K$ with $|K| = r-2$ and a set $T$ with $|T| = (1-\delta)n$ such that every vertex in $K$ is adjacent to every vertex in $T$. Then we may add edges to $K$ to make it a clique and the resulting graph will still have no $K_r$ minor.
\end{theorem}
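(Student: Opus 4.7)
Suppose for contradiction that $G^+:=G\cup\binom{K}{2}$ contains a $K_r$ minor, realized by branch sets $B_1,\dots,B_r$; from it we will derive a $K_r$ minor in $G$, contradicting the hypothesis. Since $|K|=r-2<r$, at most $r-2$ branch sets meet $K$, so we relabel them so that $B_{r-1}$ and $B_r$ avoid $K$. The only edges in $E(G^+)\setminus E(G)$ lie inside $K$; in particular $B_{r-1}$ and $B_r$ are already connected in $G$, and the minor-edge between them is in $E(G)$.

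The plan is to augment each $B_i$ with $i\le r-2$ by one vertex $t_i\in T$, setting $B_i'=B_i\cup\{t_i\}$. Because every $t\in T$ is adjacent in $G$ to every vertex of $K$, any ``fake'' edge $\{k,k'\}\subseteq K$ used in a spanning tree of $B_i$ can be rerouted via $t_i$ using the real edges $kt_i,t_ik'\in E(G)$; this keeps $B_i'$ connected in $G$. Likewise any fake minor-edge $e_{ij}=\{k,k'\}$, which must have $k\in K\cap B_i$ and $k'\in K\cap B_j$, is replaced by the real edge $t_ik'$ from $B_i'$ to $B_j'$. All real edges in the original minor---in particular every edge incident to $B_{r-1}$ or $B_r$, whose endpoints lie outside $K$---persist unchanged. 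Hence, provided the $t_i$'s are pairwise distinct and each $t_i\notin B_\ell$ for $\ell\ne i$, the sets $B_1',\dots,B_r'$ form a $K_r$ minor in $G$, yielding the desired contradiction.

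The remaining task, and the main obstacle, is choosing such $t_i$'s. Whenever $B_i\cap T\ne\emptyset$ we simply take $t_i\in B_i\cap T$, which is automatically consistent with disjointness; for the other indices (at most $r-2$ of them) we need fresh vertices $t_i\in T\setminus\bigcup_\ell B_\ell$. To control $|T\cap\bigcup_\ell B_\ell|$ we fix the minor in $G^+$ to minimize $\sum_\ell|B_\ell|$: each $B_\ell$ is then a Steiner tree whose leaves are terminals (endpoints of some $e_{\ell j}$), and an internal $T$-vertex $t$ whose two spanning-tree neighbors both lie in $K$ can be short-cut by deleting $t$ and using the $G^+$-clique edge between those neighbors, contradicting minimality. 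Combined with $|R|=\delta n-(r-2)$ being the only other possible home for non-terminal internal vertices, this reduces $\sum_\ell|B_\ell\cap T|$ to a function of $r$, and the hypotheses $(1-2\delta)n>r$ and $(1-\delta)n>\binom{r-2}{2}+2$ leave $|T|$ comfortably larger than this bound, guaranteeing enough fresh $t_i$'s.
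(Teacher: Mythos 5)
Your reduction step is fine: augmenting each branch set $B_i$ that meets $K$ by a vertex $t_i\in T$ does repair both the fake tree edges inside $B_i$ and the fake minor-edges between branch sets, since every vertex of $T$ is adjacent to all of $K$, and every component of $B_i$ minus its fake tree edges contains a vertex of $K$. The genuine gap is in the final counting step, which you yourself flag as the main obstacle but do not actually close. You need roughly $r-2$ vertices of $T$ outside $\bigcup_\ell B_\ell$, and your justification that $\sum_\ell|B_\ell\cap T|$ is ``a function of $r$'' does not follow from minimality. Minimality plus your short-cut only eliminates non-terminal $T$-vertices all of whose tree-neighbors lie in $K$. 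A non-terminal $T$-vertex with a tree-neighbor in $T$ or in $R=V\setminus(K\cup T)$ cannot be short-cut, and nothing you have said rules out branch sets threading through huge portions of $T$ along paths of the form $t_1$--$c_1$--$t_2$--$c_2$--$\cdots$ with $c_i\in R$, or along edges inside $T$ itself (you never show $T$ is independent). Your parenthetical appeal to $|R|=\delta n-(r-2)$ makes this worse, not better: even if every problematic $T$-vertex could be charged to an $R$-vertex, the resulting bound is of order $\delta n$, not a function of $r$, and since the hypotheses only force $\delta<1/2$, a bound like $(1-\delta)n-c\,\delta n$ need not leave any room at all.

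What is missing is precisely the structural input that the paper establishes first, using the hypothesis $(1-\delta)n>\binom{r-2}{2}+2$ to build $K_r$-subdivisions: (i) $T$ is independent (two adjacent vertices of $T$, together with $K$ and $\binom{r-2}{2}$ distinct connector vertices $b_{xy}\in T$, give a $K_r$-subdivision in $G$); and (ii) each component of $G\setminus(K\cup T)$ has neighbors in at most one vertex of $T$ (same construction with a path through the component). These two facts show that at least $(1-2\delta)n$ vertices of $T$ are adjacent to nothing but $K$, and that is what makes any counting of the kind you want possible. The paper then avoids your augmentation/counting altogether: it fixes the minor in $G^{+}$ so as to maximize the number of branch sets meeting this set $D$ of degree-$(r-2)$ vertices, uses a spare vertex $v\in D$ (this is where $(1-2\delta)n>r$ enters) to show every branch set disjoint from $D$ must avoid $K$, and then verifies directly that the original branch sets are connected and pairwise adjacent in $G$. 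To salvage your route you would have to prove (i) and (ii) and then redo the count honestly, at which point you would also need to check that the resulting bound (roughly $\delta n$ plus $O(r^2)$ terminals) fits under $|T|$ with the theorem's stated inequalities, which is not automatic.
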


\begin{proof}
\begin{claim}
$T$ induces an independent set.
\end{claim}
To see this, suppose that there are $u,v\in T$ that are adjacent. For each pair $x,y \in K$, choose $b_{xy} \in T$ all distinct and different from $u$ and $v$ (this can be done as $|T| > \binom{r-2}{2} + 2$ by assumption). Then the paths $x-b_{xy}-y$ along with $u$ and $v$ form a subdivision of $K_r$, a contradiction.
\begin{claim}
Let $C$ be a component of $G\setminus (K\cup T)$. Then there is at most one vertex in $T$ with a neighbor in $C$.
\end{claim}
Suppose $u,v \in T$ each have a neighbor in $C$. Then there is a path from $u$ to $v$ where all the interior vertices are in $C$. Choose $b_{xy}$ as before. Then this path along with the paths $x-b_{xy}-y$ form a subdivision of $K_r$. This is a contradiction, proving the claim.

\medskip

Now let $D$ be the set of vertices in $T$ that have degree exactly $r-2$ in $G$. Since there are at most $\delta n$ components in $G\setminus (K\cup T)$, we have that $|D| \geq (1-2\delta)n$. Now add edges to $K$ so that $K$ induces a clique. Assume that there is now a $K_r$ minor in $G$. Consequently, there are $r$ disjoint sets $X_1,\cdots, X_r$ of vertices such that 
\begin{enumerate}[(a)]
\item\label{a} For each $X_i$, either $G[X_i]$ is connected or every component of $G[X_i]$ intersects $K$.
\item\label{b} For all distinct $i,j$ either there is an edge of $G$ between $X_i$ and $X_j$, or both $X_i$ and $X_j$ have nonempty intersection with $K$.
\end{enumerate}

Choose $X_1,\cdots, X_r$ such that as many of them as possible have nonempty intersection with $D$. Since the vertices of $D$ all have the same neighborhood, we can choose $X_1,\cdots, X_r$ such that each $X_i$ contains at most one vertex from $D$. Since we assumed $(1-2\delta)n > r$ there is a vertex $v\in D$ which is not in any of the sets $X_1,\cdots, X_r$.

\begin{claim}\label{3}
For $1\leq i\leq r$, if $X_i$ is disjoint from $D$ then $X_i$ is disjoint from $K$.
\end{claim}
If there exists $X_i$ that is disjoint from $D$ but intersects $K$, then we can add $v$ to $X_i$ giving a better choice of $X_1, \cdots, X_r$.

\begin{claim}
$G[X_i]$ is connected for all $i$.
\end{claim}
Assume for the sake of contradiction that $G[X_i]$ is disconnected. Then by \eqref{a}, we have that each component of $G[X_i]$ intersects $K$. Claim \ref{3} implies that $X_i$ intersects $D$. But if each component intersects $K$ and $X_i$ contains a vertex in $D$, then $G[X_i]$ is connected.

\begin{claim}\label{5}
For any distinct $i$ and $j$, there is an edge of $G$ between $X_i$ and $X_j$.
\end{claim}
Assume for the sake of contradiction that there is an $i$ and $j$ with no edge between $X_i$ and $X_j$. Then \eqref{b} implies that both $X_i$ and $X_j$ have nonempty intersection with $K$ and Claim \ref{3} implies that they both intersect $D$. Therefore there is an edge (between $K$ and $D$, ie in $E(G)$) between them.

\medskip

Now $X_1, \cdots, X_r$ form a $K_r$ minor in $G$, a contradiction.
\end{proof}

\begin{theorem}\label{K_s,t clique}
Let $\mathcal{H}$ be a fixed family of graphs and let $G$ be an $n$-vertex graph with no minor from $\mathcal{H}$. Assume further that there is a set $K$ of size $s-1$ and a set $T$ with $|T| = (1-\delta)n > (s-1)(s-2)/2$ and that every vertex in $K$ is adjacent to every vertex in $T$. Let $c$ be such that every graph of average degree $c$ has some $H\in \mathcal{H}$ as a minor. Then there is a set of at most 
\[
\frac{c(s-1)(s-2)}{2(1-\delta)}
\]
edges such that after deleting these edges we can make $K$ a clique without introducing a minor from $\mathcal{H}$.
\end{theorem}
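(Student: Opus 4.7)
The plan is to isolate a set $T_0 \subseteq T$ of $|\mathcal{P}|$ ``gadget'' vertices, one per missing edge of $K$, by deleting their edges to $V\setminus K$. These deleted edges form the promised set, and after making $K$ into a clique each $t_{xy}\in T_0$ will serve as a dedicated length-$2$ path realising the missing edge $xy$, so that the new graph inherits the no-$\mathcal{H}$-minor property.

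First I bound the cost. Let $\mathcal{P}$ denote the set of non-edges of $K$ in $G$, so $|\mathcal{P}|\leq (s-1)(s-2)/2$. Since $G-K$ has no $\mathcal{H}$-minor, it has average degree less than $c$, so $\sum_{t\in T}\deg_{G-K}(t) \leq 2e(G-K)\leq cn$, giving an average at most $c/(1-\delta)$ over $T$. Choose $T_0\subseteq T$ of size $|\mathcal{P}|$ minimising $\sum_{t\in T_0}\deg_{G-K}(t)$; then this sum is at most $|\mathcal{P}|\cdot c/(1-\delta) \leq (s-1)(s-2)c/(2(1-\delta))$. Let $E_0$ be the set of edges of $G$ with one endpoint in $T_0$ and the other in $V\setminus K$; then $|E_0|\leq \sum_{t\in T_0}\deg_{G-K}(t)$, giving the required bound. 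Fix a bijection $\{x,y\}\leftrightarrow t_{xy}$ between $\mathcal{P}$ and $T_0$, and set $\tilde G := (G-E_0)\cup\{xy:\{x,y\}\in \mathcal{P}\}$, so $K$ is a clique in $\tilde G$ and each $t_{xy}\in T_0$ has neighbourhood exactly $K$ in $\tilde G$.

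To finish I show $\tilde G$ has no $\mathcal{H}$-minor by lifting to $G$. Assume for contradiction $\{B_v\}$ is a model in $\tilde G$ of some $H\in\mathcal{H}$. The crucial shortcut is that each $t\in T_0$ has $K$-only neighbourhood in $\tilde G$ while $K$ is a clique, so any path $k - t - k'$ through $t$ can be replaced by the direct edge $kk'$; consequently, if $|B_v|\geq 2$ and $t\in B_v$, then $B_v\setminus\{t\}$ remains connected in $\tilde G$. Iteratively applying this, and handling each singleton $T_0$-branch set $\{t_{xy}\}$ by substituting $\{t'\}$ for a free $t'\in T\setminus T_0$ not appearing in any other branch set (since $t'$ is also adjacent to all of $K$, it plays the same role as $t_{xy}$ in realising every $H$-edge at $v$), one may assume no $T_0$-vertex lies in any branch set. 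Then for each $\{x,y\}\in\mathcal{P}$ such that the added edge $xy$ is used in the minor---either as an inter-branch edge between $B_u\ni x$ and $B_v\ni y$, or as an internal bridge of some $B_w\ni x,y$---adjoin $t_{xy}$ to the relevant branch set; the $G$-edges $xt_{xy}$ and $t_{xy}y$ survive in $G-E_0$ and preserve all required connectivity and inter-branch links, since each $t_{xy}$ is used at most once (the edge $xy$ plays only one role). The result is an $H$-minor of $G-E_0\subseteq G$, contradicting the hypothesis that $G$ has no $\mathcal{H}$-minor.

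The main obstacle is the normalisation step, and specifically the singleton case $B_v=\{t_{xy}\}$: substituting a free $T$-vertex requires showing that $\bigcup_v B_v$ cannot cover too much of $T\setminus T_0$, which relies on choosing a minor model of the fixed graph $H$ of minimum total branch-set size and using that $|T|=(1-\delta)n$ is far larger than both $|T_0|$ and the bounded support of any such minimal model. The non-singleton case is immediate from the clique-shortcut, and the lifting is a clean gadget-replacement once normalisation is secured.
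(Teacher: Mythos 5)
Your cost estimate and gadget set-up follow the paper's first-moment argument almost verbatim, and your clique-shortcut observation correctly handles any $t\in T_0$ lying in a branch set of size at least two (both internal connectivity and external adjacencies survive its removal, since its neighbours in the branch set lie in $K$ and $K$ is a clique in $\tilde G$). The genuine gap is exactly where you flag it: the singleton case $B_v=\{t_{xy}\}$ when the added edge $xy$ is also used by the model. Your remedy needs an unused vertex of $T\setminus T_0$, and the justification you sketch --- that a model of the fixed graph $H$ of minimum total branch-set size has support bounded in terms of $H$ alone, so that $|T|=(1-\delta)n$ dwarfs it --- is not a valid principle: in a graph that is (or locally resembles) a long subdivision, \emph{every} model of $H$ must use almost all vertices, so minimality does not bound the support, and you give no argument exploiting the special structure of $\tilde G$ to rule out a model covering all of $T\setminus T_0$. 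Moreover the theorem only assumes $|T|>(s-1)(s-2)/2$ (and $\mathcal H$ need not be finite), so ``$|T|$ far larger than the support of a minimal model'' is not available from the hypotheses. Note also that a singleton $\{t_{xy}\}$ is harmless by itself (it uses only surviving $G$-edges); the only real conflict is when that same gadget is needed to reroute the added edge $xy$, and it is precisely this conflict that remains unresolved.

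The paper sidesteps this entirely by a slightly different deletion: for each missing pair $xy$ it deletes \emph{all} edges at the gadget vertex $b_{xy}$ except the two edges to $x$ and $y$, so each gadget becomes a degree-two vertex with neighbourhood exactly $\{x,y\}$; the same first-moment bound (applied to full degrees, $\sum_{t\in T}\deg_G(t)\le 2e(G)\le cn$) still gives at most $c(s-1)(s-2)/(2(1-\delta))$ deleted edges. Then adding the edge $xy$ between the two neighbours of a degree-two vertex cannot create a new minor from $\mathcal H$ (in the paper's applications the relevant excluded minors have minimum degree at least $3$, so the gadget can never be forced to serve as its own branch set, and the rerouting through $b_{xy}$ is immediate). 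If you adopt this stronger pruning of the gadgets --- which costs nothing in the edge bound --- your lifting argument closes without any need for free vertices of $T$; as written, however, the proposal is incomplete.
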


\begin{proof}
Let $d$ be the average degree of vertices in $T$. $G$ has at most $cn/2$ edges, so $d|T|/2 \leq cn/2$ which implies that 
\[
d\leq \frac{c}{1-\delta}.
\]
Let $m= (s-1)(s-2)/2$ and let $M$ be a set of $m$ vertices in $T$ with sum of degrees as small as possible. By the first moment method, there are at most 
\[
\frac{c(s-1)(s-2)}{2(1-\delta)}
\]
edges in $G$ that are incident with $M$. For each pair $xy$ of vertices in $K$, choose a vertex $b_{xy}\in M$ such that all $b_{xy}$ are distinct. For each $x,y$, delete all edges adjacent to $b_{xy}$ except for the edges to $x$ and to $y$. Let $G'$ be the subgraph of $G$ produced, and note that $G'$ does not have a minor from $\mathcal{H}$. Now, $x$ and $y$ are nonadjacent vertices in $K$, then they are the neighbors of a vertex of degree $2$ in $G'$. Thus making $x$ adjacent to $y$ cannot introduce a minor in $G'$ unless it was already present.
\end{proof}

\section{Graphs with no $K_r$ minor}\label{section K_r}
Let $G_r$ be an $n$ vertex graph with no $K_r$ minor which has maximum spectral radius of its adjacency matrix among all such graphs. In this section, we will prove Theorem \ref{K_r}, that $G_r$ is the join of $K_{r-2}$ and an independent set of size $n-r+2$ for sufficiently large $n$. Let $A$ be the adjacency matrix of $G_r$ and let $\lambda$ be the largest eigenvalue of $A$. Let $\mathbf{x}$ be an eigenvector for $\lambda$. Without loss of generality, we may assume the $G_r$ is connected and so $\mathbf{x}$ is well-defined. We will assume throughout this section that $\mathbf{x}$ is normalized to have maximum entry equal to $1$, and that $z$ is a vertex such that $\mathbf{x}_z = 1$ (if there is more than one such vertex, choose $z$ arbitrarily). We will use throughout the section that $e(G_r) = O(n)$ since $G_r$ has no $K_r$ minor. The outline of our proof is as follows:
\begin{enumerate}
\item First we show that if a vertex has eigenvector entry close to $1$, then it has degree close to $n$ (Lemma \ref{lemma non-neighbors in S}).
\item We show that there are $r-2$ vertices of eigenvector entry close to $1$, and hence degree close to $n$.
\item We use Theorem \ref{K_r clique} to show that these $r-2$ vertices induce a clique.
\item We show that each of the $r-2$ vertices in the clique actually have degree $n-1$.
\end{enumerate}

First, we split the vertex set into vertices with ``large" eigenvector entry and those with ``small". Let
\[
L = \{v\in V(G) : \mathbf{x}_v > \epsilon\},
\]
and 
\[
S = \{v \in V(G): \mathbf{x}_v \leq \epsilon\}
\]
where $\epsilon$ will be chosen later.

\begin{lemma}\label{lower bound on lambda}
$\sqrt{(r-2)(n-r+2)} \leq \lambda =  O\left(\sqrt{n}\right)$.
\end{lemma}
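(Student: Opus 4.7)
The plan is to establish the two bounds separately, and both are short.

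For the lower bound I would use the extremality of $G_r$. Let $H_r = K_{r-2} \vee \overline{K_{n-r+2}}$ denote the join of a clique of size $r-2$ and an independent set of size $n-r+2$; this is the candidate extremal graph. First I would verify that $H_r$ contains no $K_r$ minor: in any attempted $K_r$ model, the $r-2$ clique vertices can at best serve as $r-2$ branch sets, and the remaining branch sets would have to be disjoint subsets of the independent side; since any two vertices on that side are non-adjacent, each of the remaining branch sets would need to consist of a single vertex, but then no two such singletons are joined by an edge, contradicting the $K_r$ minor definition. Consequently $\lambda \geq \lambda_1(H_r)$ by extremality of $G_r$. Next, observe that $H_r$ contains $K_{r-2,n-r+2}$ as a (spanning) subgraph (discarding the edges inside the clique), so by interlacing/Rayleigh
\[
\lambda_1(H_r) \geq \lambda_1(K_{r-2,n-r+2}) = \sqrt{(r-2)(n-r+2)},
\]
where the last equality is the well-known spectrum of a complete bipartite graph. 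Chaining gives the lower bound.

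For the upper bound I would invoke the standing fact noted just before the lemma that $e(G_r) = O(n)$, which follows from Mader's theorem (or Kostochka--Thomason) applied to the $K_r$-minor-free graph $G_r$; the implicit constant depends only on $r$. Then I would use the classical trace bound: since $A^2$ is positive semidefinite with $\operatorname{tr}(A^2) = 2e(G_r)$, its largest eigenvalue satisfies
\[
\lambda^2 = \lambda_1(A)^2 = \lambda_1(A^2) \leq \operatorname{tr}(A^2) = 2e(G_r) = O(n),
\]
so $\lambda = O(\sqrt{n})$, as required.

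There is essentially no obstacle to this lemma; the only minor point worth being careful about is the verification that $H_r$ is $K_r$-minor-free, which is standard and handled in one sentence as above. Everything else is a direct application of the spectrum of $K_{a,b}$, Rayleigh monotonicity under taking subgraphs, and the trace bound for $\lambda_1(A)^2$.
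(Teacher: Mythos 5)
Your proof is correct and follows essentially the same route as the paper: compare $G_r$ against a $K_r$-minor-free graph whose spectral radius is $\sqrt{(r-2)(n-r+2)}$ (the paper uses $K_{r-2,n-r+2}$ directly rather than passing through the join), and obtain the upper bound from $\lambda^2 \leq \operatorname{tr}(A^2) = 2e(G_r) = O(n)$.
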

\begin{proof}
$K_{r-2, n-r+2}$ has no $K_r$ minor. Since $G_r$ is extremal, $\lambda > \lambda_1(A(K_{r-2, n-r+2})) = \sqrt{(r-2)(n-r+2)}$. For the upper bound, since $e(G_r) = O(n)$, the equality $2e(G) = \sum_{i=1}^n \lambda_i^2$ implies that $\lambda \leq \sqrt{2 e(G_r)} = O(\sqrt{n})$.
\end{proof}

The next lemma shows that $L$ is not too large.

\begin{lemma}\label{size of L}
\[
|L| = O\left( \sqrt{n} \right).
\]
\end{lemma}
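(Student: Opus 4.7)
The plan is to combine the eigenvalue equation with the fact that $e(G_r) = O(n)$ and the lower bound $\lambda = \Omega(\sqrt{n})$ from Lemma \ref{lower bound on lambda}. Since $\mathbf{x}$ is normalized so that its maximum entry is $1$, every vertex $v \in L$ has a degree forced to be large: the eigenvalue equation combined with the definition of $L$ will give a linear-in-$\lambda$ lower bound on $\deg(v)$, and summing those lower bounds against the $O(n)$ edge count immediately pins down $|L|$.

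In more detail, I would first record that for every vertex $v$,
\[
\lambda \mathbf{x}_v \;=\; \sum_{u \sim v} \mathbf{x}_u \;\leq\; \deg(v),
\]
because every coordinate of $\mathbf{x}$ is at most $1$. If $v \in L$, then $\mathbf{x}_v > \epsilon$, so this bound becomes $\deg(v) > \lambda \epsilon$.

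Next I would sum this over all $v \in L$ to get
\[
\lambda \epsilon \, |L| \;<\; \sum_{v \in L} \deg(v) \;\leq\; \sum_{v \in V(G_r)} \deg(v) \;=\; 2e(G_r).
\]
Since $G_r$ is $K_r$-minor-free, the Mader/Kostochka-Thomason bound gives $e(G_r) = O(n)$ (with a constant depending only on $r$), as noted at the start of this section. Combining this with Lemma \ref{lower bound on lambda}, which tells us $\lambda \geq \sqrt{(r-2)(n-r+2)} = \Omega(\sqrt n)$, yields
\[
|L| \;<\; \frac{2 e(G_r)}{\lambda \epsilon} \;=\; \frac{O(n)}{\Omega(\sqrt n)\, \epsilon} \;=\; O\!\left(\sqrt{n}\right),
\]
where the implied constant depends on $r$ and $\epsilon$ but not on $n$.

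There is no real obstacle here: the argument is a one-line degree-averaging computation. The only thing to make sure of is that $\epsilon$ is treated as a fixed constant (to be pinned down later in the proof of Theorem \ref{K_r}) so that the $O(\sqrt n)$ bound is meaningful, and that the Mader-type bound $e(G_r) = O(n)$ is genuinely available, which it is since $G_r$ excludes $K_r$ as a minor.
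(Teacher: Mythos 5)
Your proof is correct and follows essentially the same route as the paper: the paper sums the eigenvalue equation $\lambda \mathbf{x}_u = \sum_{v\sim u}\mathbf{x}_v$ over $u\in L$, bounds the double sum by $2e(G_r)$ using the normalization $\|\mathbf{x}\|_\infty = 1$, and then invokes $e(G_r)=O(n)$ and $\lambda = \Omega(\sqrt{n})$, which is exactly your degree-averaging computation phrased vertex by vertex. No gaps; your remark that $\epsilon$ is a fixed constant is the right thing to keep in mind.
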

\begin{proof}
We sum the eigenvector eigenvalue equation over all vertices in $L$:
\[
\lambda\sum_{u\in L} \mathbf{x}_u = \sum_{u\in L} \sum_{v\sim u} \mathbf{x}_v \leq 2e(G_r),
\]
where the last inequality follows because we have normalized so that each eigenvector entry is at most $1$. Now using that $e(G_r) = O(n)$ and $\lambda = \Omega(\sqrt{n})$ gives the result.

\end{proof}

Lemma \ref{size of L} and Lemma \ref{K_r bipartite bound} imply that for $n$ large enough, we have 
\begin{equation}\label{e(S,L) bound}
e(S,L) \leq (r-2+\epsilon)n.
\end{equation}

Next we use \eqref{e(S,L) bound} and the eigenvector-eigenvalue equation to give a bound on the sum over all eigenvector entries from $S$ and $L$.

\begin{lemma}\label{S and L entries bound}
There is an absolute constant $C_1$ depending on $R$ such that 
\[
\sum_{u\in S} \mathbf{x}_u \leq (1+C_1\epsilon) \sqrt{(r-2)(n-r+2)}
\]
and
\[
\sum_{u\in L} \mathbf{x}_u \leq C_1\epsilon\sqrt{(r-2)(n-r+2)}.
\]
\end{lemma}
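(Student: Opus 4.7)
The plan is to apply the eigenvector--eigenvalue equation $\lambda\mathbf{x}_u = \sum_{v\sim u}\mathbf{x}_v$ by summing it over all vertices in $S$ (respectively $L$), swapping the order of summation so that the right-hand side becomes $\sum_v d_S(v)\mathbf{x}_v$ (respectively $\sum_v d_L(v)\mathbf{x}_v$), and then splitting that sum according to whether $v\in S$ or $v\in L$ so that we can use $\mathbf{x}_v\le\epsilon$ on $S$ and $\mathbf{x}_v\le 1$ on $L$. The extremal inputs we have available are $\lambda\ge\sqrt{(r-2)(n-r+2)}$ (Lemma \ref{lower bound on lambda}), $e(G_r)=O(n)$ (since $G_r$ is $K_r$-minor-free), the bound $e(S,L)\le (r-2+\epsilon)n$ from \eqref{e(S,L) bound}, and $|L|=O(\sqrt{n})$ from Lemma \ref{size of L}.

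For the first bound, summing over $u\in S$ yields
\[
\lambda\sum_{u\in S}\mathbf{x}_u \;=\; \sum_{v\in S} d_S(v)\mathbf{x}_v + \sum_{v\in L} d_S(v)\mathbf{x}_v.
\]
On the first term I use $\mathbf{x}_v\le\epsilon$ together with $\sum_{v\in S} d_S(v)\le 2e(G_r)=O(n)$, giving $O(\epsilon n)$. On the second term I use $\mathbf{x}_v\le 1$ together with $\sum_{v\in L} d_S(v) = e(S,L)\le (r-2+\epsilon)n$. Hence $\lambda\sum_{u\in S}\mathbf{x}_u \le (r-2+O(\epsilon))n$, and dividing by $\lambda\ge\sqrt{(r-2)(n-r+2)}$ produces (for $n$ large) a bound of the shape $(1+C_1\epsilon)\sqrt{(r-2)(n-r+2)}$ once the ratio $n/(n-r+2)\to 1$ is absorbed into the constant.

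For the second bound, the analogous computation gives
\[
\lambda\sum_{u\in L}\mathbf{x}_u \;=\; \sum_{v\in S} d_L(v)\mathbf{x}_v + \sum_{v\in L} d_L(v)\mathbf{x}_v.
\]
The first term is at most $\epsilon\cdot e(S,L) = O(\epsilon n)$. For the second term, the crucial extra ingredient is that the subgraph induced on $L$ is itself $K_r$-minor-free, so by Mader's theorem $e(L) = O(|L|) = O(\sqrt{n})$ by Lemma \ref{size of L}, and thus $\sum_{v\in L}d_L(v)\mathbf{x}_v\le 2e(L) = O(\sqrt{n})$. Combining, $\lambda\sum_{u\in L}\mathbf{x}_u \le O(\epsilon n)+O(\sqrt{n}) = O(\epsilon n)$, and dividing by $\lambda = \Omega(\sqrt{n})$ gives $\sum_{u\in L}\mathbf{x}_u = O(\epsilon\sqrt{n}) \le C_1\epsilon\sqrt{(r-2)(n-r+2)}$ for the same $C_1$ after enlarging it if necessary.

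The main obstacle here is not conceptual but bookkeeping: assembling the several additive $O(\epsilon n)$ and $O(\sqrt{n})$ error terms, together with the discrepancy between $\sqrt{n}$ and $\sqrt{(r-2)(n-r+2)}$, into a single constant $C_1$ depending only on $r$ and working uniformly for all sufficiently large $n$. Both bounds share the same argument, so it suffices to track constants carefully once.
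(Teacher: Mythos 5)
Your proposal is correct and follows essentially the same route as the paper: sum the eigenvalue equation over $S$ (resp.\ $L$), split the neighbor sums by $S$ versus $L$, bound them by $\epsilon\cdot 2e(S)+e(S,L)$ (resp.\ $\epsilon\, e(S,L)+2e(L)$ with $e(L)=O(|L|)=O(\sqrt n)$ from minor-freeness and Lemma \ref{size of L}), and divide by $\lambda\ge\sqrt{(r-2)(n-r+2)}$. The only difference is cosmetic (writing the double sums via $d_S(v)$, $d_L(v)$), and your bookkeeping of the error terms is sound.
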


\begin{proof}
For the first inequality, using the eigenvector-eigenvalue equation and summing over vertices in $S$ gives
\begin{align*}
\lambda \sum_{u\in S} \mathbf{x}_u &= \sum_{u\in S} \sum_{v\sim u} \mathbf{x}_v = \sum_{u\in S} \sum_{\substack{v\sim u \\ v\in S}} \mathbf{x}_v + \sum_{u\in S}\sum_{\substack{v\sim u \\ v\in L}} \mathbf{x}_v \\
& \leq  \sum_{u\in S} \sum_{\substack{v\sim u \\ v\in S}} \epsilon + \sum_{u\in S}\sum_{\substack{v\sim u \\ v\in L}} 1 \leq \epsilon \cdot 2e(S) + e(S,L).
\end{align*}
Using Lemma \ref{S and L entries bound}, that $e(S) = O(n)$, and that Lemma \ref{lower bound on lambda} proves the inequality.

The second inequality is similar:
\begin{equation*}
\lambda \sum_{u\in L} \mathbf{x}_u = \sum_{u\in L} \sum_{v\sim u} \mathbf{x}_v = \sum_{u\in L}\sum_{\substack{v\sim u \\ v\in S}} \mathbf{x}_v + \sum_{u\in L}\sum_{\substack{v\sim u \\ v\in L}} \mathbf{x}_v \leq \epsilon e(S,L) + 2e(L).
\end{equation*}
Lemma \ref{size of L} implies that $e(L) = O(\sqrt{n})$ and noting that $\lambda = \Omega(\sqrt{n})$ completes the proof.
\end{proof}

Now we would like to show that if a vertex has eigenvector entry close to $1$, then it must be adjacent to most of the vertices in $S$. Let $u\in L$. Then
\begin{align*}
&\sqrt{(r-2)(n-r+2)} \mathbf{x}_u \leq \lambda \mathbf{x}_u  = \sum_{v\sim u} \mathbf{x}_v \leq \sum_{v\in L} \mathbf{x}_v + \sum_{\substack{v\sim u \\ v\in S}} \mathbf{x}_v \\
&= \sum_{v\in L} \mathbf{x}_v + \left(\sum_{v\in S} \mathbf{x}_v - \sum_{\substack{v\not\sim u \\ v\in S}} \mathbf{x}_v\right) \\
& \leq C_1 \epsilon \sqrt{(r-2)(n-r+2)} + \left((1+C_1\epsilon)\sqrt{(r-2)(n-r+2)} -  \sum_{\substack{v\not\sim u \\ v\in S}} \mathbf{x}_v\right).
\end{align*}

That is,
\begin{equation}\label{eq non-neighbors in S}
\sum_{\substack{v\not\sim u\\v\in S}} \mathbf{x}_v \leq (1 + 2C_1\epsilon - \mathbf{x}_u)\sqrt{(r-2)(n-r+2)}.
\end{equation}

This equation says that if $\mathbf{x}_u$ is close to $1$, then the sum of eigenvector entries over all vertices in $S$ not adjacent to $u$ is not too big. In order to show that this implies $u$ is adjacent to most of the vertices in $S$, we need an easy lower bound on the eigenvector entries of the vertices in $V(G_r)$.

\begin{claim}\label{lower bound eigenvector entry}
There is an absolute constant $C_2$ such that for all $u\in V(G_r)$, $\mathbf{x}_u\geq  \frac{1}{C_2\sqrt{(r-2)(n-r+2)}}$.
\end{claim}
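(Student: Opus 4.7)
The plan is to prove the stronger statement that $\mathbf{x}_u \geq 1/\lambda$ for every vertex $u$; because $\lambda = O(\sqrt{n})$ by Lemma \ref{lower bound on lambda} and $\sqrt{n} = O(\sqrt{(r-2)(n-r+2)})$ for fixed $r$, this immediately yields the bound in the claim with $C_2$ depending only on $r$. The argument is a standard edge-switching trick using the extremality of $G_r$: if some vertex had a very small eigenvector entry, then re-attaching it as a pendant of $z$ would strictly increase the spectral radius without introducing a $K_r$-minor, contradicting the choice of $G_r$.

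Concretely, suppose for contradiction that $\mathbf{x}_u < 1/\lambda$ for some $u \neq z$ (the case $u = z$ is trivial since $\mathbf{x}_z = 1 \geq 1/\lambda$), and let $G'$ be the graph obtained from $G_r$ by deleting every edge incident to $u$ and then adding the single edge $uz$, so that $u$ becomes a leaf with unique neighbor $z$ in $G'$. The first step is to verify that $G'$ is still $K_r$-minor-free. Since $G'[V\setminus\{u\}] = G_r - u$ has no $K_r$-minor, any hypothetical $K_r$-minor of $G'$ would have to use $u$ in some branch set $B_i$; but $u$ has degree only $1$ in $G'$, so $B_i = \{u\}$ is ruled out (one would need $r-1 \geq 2$ cross-edges from $B_i$), forcing $z \in B_i$. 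Then $B_i \setminus \{u\}$ is still connected (a leaf is being removed), and no cross-edge of the minor is incident to $u$, so the resulting partition would be a $K_r$-minor of $G_r$, a contradiction.

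The second step is a Rayleigh-quotient comparison. With the test vector $\mathbf{y}$ agreeing with $\mathbf{x}$ off $u$ and set to $\mathbf{y}_u = 1/\lambda$, the eigenvalue equation $\lambda \mathbf{x}_u = \sum_{v\sim u}\mathbf{x}_v$ together with $\mathbf{x}_z = 1$ gives
\[
\mathbf{y}^T A(G')\mathbf{y} = \lambda\|\mathbf{x}\|^2 - 2\lambda \mathbf{x}_u^2 + \frac{2}{\lambda}, \qquad \|\mathbf{y}\|^2 = \|\mathbf{x}\|^2 - \mathbf{x}_u^2 + \frac{1}{\lambda^2},
\]
so that $\mathbf{y}^T A(G')\mathbf{y} - \lambda\|\mathbf{y}\|^2 = \tfrac{1}{\lambda} - \lambda\mathbf{x}_u^2 > 0$ under the standing assumption $\mathbf{x}_u < 1/\lambda$. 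This forces $\lambda_1(G') > \lambda$, contradicting the extremality of $G_r$.

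The only step requiring genuine care is the minor-freeness verification for $G'$; the Rayleigh computation is routine, and translating $\mathbf{x}_u \geq 1/\lambda$ into the form $\mathbf{x}_u \geq 1/(C_2\sqrt{(r-2)(n-r+2)})$ is just an application of Lemma \ref{lower bound on lambda} with $C_2$ absorbing the $r$-dependent proportionality constant between $\sqrt{n}$ and $\sqrt{(r-2)(n-r+2)}$.
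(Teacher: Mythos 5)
Your proposal is correct and follows essentially the same route as the paper: the same surgery (delete all edges at $u$, reattach $u$ as a pendant of $z$, noting this preserves $K_r$-minor-freeness) combined with a Rayleigh-quotient comparison against the extremality of $G_r$, then translating via $\lambda=O(\sqrt n)$. The only difference is cosmetic: you perturb the test vector at $u$ (setting $\mathbf{y}_u = 1/\lambda$) to obtain the clean bound $\mathbf{x}_u \geq 1/\lambda$ in one stroke, whereas the paper keeps $\mathbf{x}$ itself as the test vector and derives the contradiction directly from the assumed smallness of $\mathbf{x}_u$; both computations are sound.
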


\begin{proof}
Let $u\in V(G_r)$ be any vertex that is not $z$ (we already know that $z$ satisfies this inequality). If $u \sim z$, then $\lambda \mathbf{x}_u = \sum_{v\sim u} \mathbf{x}_v \geq \mathbf{x}_z$ and the inequality is satisfied. 

If not, assume that $\mathbf{x}_u \leq \frac{1}{C_2\sqrt{(r-2)(n-r+2)}}$. Then 
\[
\sum_{v\sim u} \mathbf{x}_v = \lambda \mathbf{x}_u \leq \frac{O(1)}{C_2}
\] 
 where the last inequality holds by the upper bound in Lemma \ref{lower bound on lambda}. Let $H$ be the graph obtained by removing all edges incident with $u$ and creating one new edge $uz$. Let the adjacency matrix of $H$ be $B$ and let $\mu$ be the spectral radius of $B$. Note that adding a leaf to a graph with no $K_r$ minor cannot produce a $K_r$ minor, and so $H$ is $K_r$ minor-free. Now since $\mu = \max_{\mathbf{z}\not= 0} \frac{\mathbf{z}^TB\mathbf{z}}{\mathbf{z}^T\mathbf{z}}$, we have 
 \begin{align*}
 \mu - \lambda \geq \frac{\mathbf{x}^T B \mathbf{x}}{\mathbf{x}^T\mathbf{x}} - \frac{\mathbf{x}^T A \mathbf{x}}{\mathbf{x}^T\mathbf{x}} = \frac{2}{\mathbf{x}^T \mathbf{x}}\left( \mathbf{x}_u\mathbf{x}_z - \mathbf{x}_u\sum_{v\sim u} \mathbf{x}_v\right) = \frac{2 \mathbf{x}_u}{\mathbf{x}^T\mathbf{x}}\left(1 - \frac{O(1)}{C_2}\right).
 \end{align*}
 Since $G_r$ is extremal, $\mu - \lambda \leq 0$, a contradiction for a large enough constant $C_2$.

\end{proof}

Now we can give a bound on the number of vertices in $S$ not adjacent to $u$.

\begin{lemma}\label{lemma non-neighbors in S}
Let $A_u = \{v\in S: v\not\sim u\}$ and assume that $\mathbf{x}_u = 1-\delta$. Then there is an absolute constant $C_3$ such that
\[
|A_u| \leq C_3 (\delta + \epsilon) n
\]
\end{lemma}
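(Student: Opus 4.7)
The plan is to combine the two main ingredients that have just been set up, namely the inequality \eqref{eq non-neighbors in S} and Claim \ref{lower bound eigenvector entry}. Substituting $\mathbf{x}_u = 1 - \delta$ into \eqref{eq non-neighbors in S} yields an upper bound
\[
\sum_{v \in A_u} \mathbf{x}_v \;\leq\; (\delta + 2C_1 \epsilon)\sqrt{(r-2)(n-r+2)}
\]
on the sum of the eigenvector entries of the non-neighbors of $u$ in $S$.

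On the other hand, each vertex $v \in A_u \subseteq V(G_r)$ satisfies $\mathbf{x}_v \geq 1/\bigl(C_2 \sqrt{(r-2)(n-r+2)}\bigr)$ by Claim \ref{lower bound eigenvector entry}. Therefore
\[
\frac{|A_u|}{C_2 \sqrt{(r-2)(n-r+2)}} \;\leq\; \sum_{v \in A_u} \mathbf{x}_v \;\leq\; (\delta + 2C_1 \epsilon)\sqrt{(r-2)(n-r+2)},
\]
which rearranges to
\[
|A_u| \;\leq\; C_2 (\delta + 2C_1 \epsilon)(r-2)(n-r+2) \;\leq\; C_2 (r-2)\max(1, 2C_1)\,(\delta + \epsilon)\,n.
\]
Setting $C_3 := C_2(r-2)\max(1, 2C_1)$ finishes the proof.

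There is no real obstacle here; the lemma is essentially a bookkeeping consequence of \eqref{eq non-neighbors in S} and the uniform lower bound on eigenvector entries. The only thing to be slightly careful about is that the constant $C_3$ is allowed to depend on $r$ (which is fine since $r$ is fixed throughout the section), and that the factor $\sqrt{(r-2)(n-r+2)}$ appearing on the right of \eqref{eq non-neighbors in S} cancels the corresponding factor coming from the lower bound on $\mathbf{x}_v$, leaving a clean linear dependence on $n$.
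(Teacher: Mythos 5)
Your proof is correct and follows the paper's own argument exactly: both apply \eqref{eq non-neighbors in S} with $\mathbf{x}_u = 1-\delta$ and then use the uniform lower bound from Claim \ref{lower bound eigenvector entry} to convert the eigenvector-entry sum into a count, yielding $|A_u| \leq C_2(\delta + 2C_1\epsilon)(r-2)(n-r+2)$. Your added remark that $C_3$ depends on $r$ (fixed throughout the section) is a fair clarification of the paper's phrasing but does not change the argument.
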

\begin{proof}
Applying \eqref{eq non-neighbors in S} and Claim \ref{lower bound eigenvector entry} yields
\[
|A_u| \leq C_2(1+2C_1 \epsilon - \mathbf{x}_u)(r-2)(n-r+2).
\]
\end{proof}

Lemma \ref{lemma non-neighbors in S} shows that the number of neighbors of $z$ tends to $n$ as $\epsilon$ goes to $0$. We now show that there are actually $r-2$ vertices of degree close to $n$.

\begin{lemma}\label{more vertices of large degree}
Assume that $1\leq k < r-2$ and that $\{v_1,\cdots, v_k\}$ are a set of vertices each with degree at least $(1-\eta)n$ and with eigenvector entry at least $1-\eta$. Then there is an absolute constant $C_4$ and a vertex $v_{k+1} \not\in \{v_1\cdots v_k\}$ such that the degree of $v_{k+1}$ is at least $(1-C_4(\eta+\epsilon))n$ and the eigenvector entry for $x_{k+1}$ is at least $1-C_4(\eta+\epsilon)$.
\end{lemma}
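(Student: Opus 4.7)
The plan is to apply the squared eigenvalue equation, $\lambda^2 \mathbf{x}_{v_1} = \sum_w \mathbf{x}_w\, |N(v_1) \cap N(w)|$, together with a sharp Tur\'an-type estimate on the degrees of vertices in $L$, to extract a vertex $v_{k+1} \in L \setminus \{v_1,\ldots,v_k\}$ whose eigenvector entry is within $O(\eta+\epsilon)$ of $1$. The degree bound then follows from Lemma \ref{lemma non-neighbors in S}. Throughout set $\beta := \max_{w \in L \setminus \{v_1,\ldots,v_k\}} \mathbf{x}_w$; the target is $\beta \geq 1 - C_4(\eta+\epsilon)$, and $v_{k+1}$ will be taken to be an argmax.

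For the lower bound, combining $\mathbf{x}_{v_1} \geq 1-\eta$ with Lemma \ref{lower bound on lambda} gives $\lambda^2 \mathbf{x}_{v_1} \geq (r-2)n - O(\eta n)$. On the right, the diagonal term $\deg(v_1)\mathbf{x}_{v_1}$ contributes at most $n$, and each of the $k-1$ terms $\mathbf{x}_{v_i}|N(v_1)\cap N(v_i)|$ for $i \geq 2$ contributes at most $n$, so together these contribute at most $kn$. Hence
\[
\sum_{w \notin \{v_1,\ldots,v_k\}} \mathbf{x}_w\,|N(v_1)\cap N(w)| \;\geq\; (r-k-2)n - O(\eta n).
\]
The $S$-part of the left side is at most $\epsilon \sum_{u \sim v_1}\deg(u) \leq 2\epsilon\, e(G_r) = O(\epsilon n)$ because $G_r$ is $K_r$-minor-free and hence has $O(n)$ edges, while the $L\setminus\{v_1,\ldots,v_k\}$-part is at most $\beta \sum_{w \in L\setminus\{v_1,\ldots,v_k\}} \deg(w)$.

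To control this last sum tightly, apply Lemma \ref{K_r bipartite bound} to the bipartite subgraph of $G_r$ with parts $L$ and $V\setminus L$: this subgraph is $K_r$-minor-free, and $|L| = O(\sqrt{n})$ by Lemma \ref{size of L}, so $e(L, V\setminus L) \leq (r-2+o(1))n$. Since also $e(L) = O(|L|) = O(\sqrt{n})$, we get $\sum_{w \in L}\deg(w) \leq (r-2+o(1))n$, and subtracting the $k$ known contributions $\deg(v_i) \geq (1-\eta)n$ leaves $\sum_{w \in L\setminus\{v_1,\ldots,v_k\}}\deg(w) \leq (r-k-2 + k\eta)n + o(n)$. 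Chaining all the inequalities yields $\beta \geq 1 - C_4(\eta+\epsilon)$ for a constant $C_4$ depending only on $r$. Invoking Lemma \ref{lemma non-neighbors in S} on $v_{k+1}$ with $\delta = C_4(\eta+\epsilon)$ then bounds its non-neighbors in $S$ by $C_3(C_4(\eta+\epsilon)+\epsilon)n$, and adding the at most $|L| = o(n)$ possible non-neighbors outside $S$ gives $\deg(v_{k+1}) \geq (1 - C_4'(\eta+\epsilon))n$ after enlarging the constant.

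The main obstacle is securing the sharp leading coefficient $(r-2)$ in the bound on $\sum_{w \in L}\deg(w)$. A crude estimate $\sum_{w \in L}\deg(w) \leq 2 e(G_r) = O_r(n)$ would only yield $\beta \geq c$ for some positive constant $c$, not $\beta$ approaching $1$. It is precisely the match between the $(r-2)n$ produced by the lower bound on $\lambda^2$ and the $(r-2+o(1))n$ coming from Lemma \ref{K_r bipartite bound} that forces $\beta \to 1$ as $\eta,\epsilon \to 0$; all the remaining bookkeeping is routine.
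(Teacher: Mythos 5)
Your proof is correct and follows essentially the same route as the paper: both rest on the eigenvalue equation for $A^2$, the sharp $(r-2+o(1))n$ bound from Lemma \ref{K_r bipartite bound} (possible since $|L|=O(\sqrt n)$), subtraction of the $k$ known high-degree vertices, an averaging/maximum argument to extract $v_{k+1}\in L\setminus\{v_1,\dots,v_k\}$ with entry $1-O(\eta+\epsilon)$, and finally Lemma \ref{lemma non-neighbors in S} for the degree. The only difference is cosmetic: you expand $\lambda^2\mathbf{x}_{v_1}$ via common neighborhoods and degree sums over $L$, while the paper expands $\lambda^2\mathbf{x}_z$ as a sum of $\mathbf{x}_v+\mathbf{x}_w$ over edges and counts the $E(S,L)$ edges avoiding $K$ --- the same double count.
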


\begin{proof}
Let $K = \{v_1,\cdots v_k\}$. Then the eigenvector-eigenvalue equation for $A^2$ gives
\begin{align*}
&(r-2)(n-r+2) \leq \lambda^2 = \lambda^2 \mathbf{x}_z  = \sum_{v\sim z}\sum_{w\sim v} \mathbf{x}_w \leq \sum_{vw \in E(G)} (\mathbf{x}_v + \mathbf{x}_w) \\
& = \sum_{vw\in E(S)} (\mathbf{x}_v + \mathbf{x}_w) + \sum_{vw\in E(S,L)} (\mathbf{x}_v + \mathbf{x}_w)+ \sum_{vw \in E(L)} (\mathbf{x}_v + \mathbf{x}_w)\\
& \leq 2\epsilon O(n) + \sum_{vw\in E(S,L)} (\mathbf{x}_v + \mathbf{x}_w) + O(\sqrt{n}).
\end{align*}
This implies that 
\[
\sum_{\substack{vw\in E(S,L) \\ v,w\not\in K}} (\mathbf{x}_v + \mathbf{x}_w) \geq (r-2)(n-r+2) - 2\epsilon O(n) - O(\sqrt{n}) - k(1+\epsilon)n. 
\]
The definition of $K$ and Lemma \ref{K_r bipartite bound} give that the number of edges with one endpoint in $S$ and one endpoint in $L$ which is not in $K$ is at most $(r-2 + o(1))n - k(1-\eta)n$. Noting that each term in the sum is at most $\mathbf{x}_v + \epsilon$ and averaging implies that there is a vertex $v\in L\setminus K$ with the requisite eigenvector entry. Applying Lemma \ref{lemma non-neighbors in S} gives the degree condition.
\end{proof}

Starting with $z = v_1$ and iteratively applying Lemma \ref{more vertices of large degree} implies that for any $\delta > 0$, we can choose $\epsilon$ small enough that $G_r$ contains a set of $r-2$ vertices with common neighborhood of size at least $(1-\delta)n$ and with each eigenvector entry at least $1-\delta$. Since adding edges to a graph strictly increases its spectral radius, Theorem \ref{K_r clique} and $G_r$ being extremal implies that these $r-2$ vertices must form a clique. From now on, we will refer to this clique of size $r-2$ as $K$.

To complete the proof, we must show that the vertices in $K$ have degree $n-1$. Once this is proved, it implies that $G_r$ is a subgraph of the join of $K_{r-2}$ and an independent set of size $n-r+2$. But since adding any edge to this graph creates a $K_r$ minor, we will have that $G_r$ is exactly $K_{r-2}$ join an independent set of size $n-r+2$. Let $T$ be the common neighborhood of $K$ and let $R = V(G_r) \setminus (T\cup K)$.

\begin{lemma}\label{upper bound small entries}
Let $c$ be a constant such that any graph of average degree $c$ has a $K_r$ minor. Then $\epsilon$ can be chosen small enough so that if $v\in V(G_r) \setminus K$, then $\mathbf{x}_v < \frac{1}{2c}$.
\end{lemma}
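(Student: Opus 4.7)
I will prove the lemma by contradiction. Suppose that $v_0 \in V(G_r) \setminus K$ satisfies $\mathbf{x}_{v_0} \geq 1/(2c)$; my plan is to use this assumption together with the structure already established to force a quantitative estimate on $\mathbf{x}_{v_0}$ that contradicts the hypothesis.

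The first step is a structural observation. Since $K$ has been shown to be a clique of size $r-2$, the common neighborhood $T = \bigcap_{k \in K} N(k)$ must be an independent set in $G_r$: if $u, w \in T$ were adjacent then $K \cup \{u, w\}$ would be a $K_r$ subgraph of $G_r$, contradicting $K_r$-minor-freeness. Therefore, for every $v \in T$, the neighborhood $N(v)$ lies in $K \cup R$, where $R = V(G_r) \setminus (K \cup T)$ has $|R| = O(\epsilon n)$.

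The first thing I would try is the vertex-replacement operation: let $G' = (G_r - v_0) + v'$ with $N_{G'}(v') = K$. A branch-set case analysis in the style of Theorem \ref{K_r clique} shows that $G'$ is $K_r$-minor-free: in any putative $K_r$-minor, the branch set $B_1$ containing $v'$ is either $\{v'\}$ (forcing $r-1$ disjoint branch sets to each meet $K$, impossible as $|K| = r-2$) or larger, in which case augmenting $B_1 \setminus \{v'\}$ with the $K$-vertices not used by the other branch sets converts it to a $K_r$-minor of $G_r$, a contradiction. Taking the test vector equal to $\mathbf{x}$ on $V(G_r) \setminus \{v_0\}$ and optimizing the entry at $v'$, the Rayleigh quotient exceeds $\lambda$ precisely when $\mathbf{x}_{v_0} < (\sum_{k \in K} \mathbf{x}_k)/\lambda$. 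Since $\sum_{k \in K} \mathbf{x}_k \leq r-2$ but $\lambda \mathbf{x}_{v_0} \geq \lambda/(2c) = \Omega(\sqrt{n})$, this naive comparison fails to deliver a contradiction when $\mathbf{x}_{v_0} \geq 1/(2c)$.

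My actual plan is therefore to bound $\mathbf{x}_{v_0}$ directly from the eigenvalue equation. In the case $v_0 \in T$, the structural fact above gives
\[
\lambda \mathbf{x}_{v_0} \leq (r-2) + \sum_{u \in R \cap N(v_0)} \mathbf{x}_u;
\]
splitting $R \cap N(v_0)$ into its intersections with $L$ and $S$, applying Lemma \ref{S and L entries bound} to bound $\sum_{u \in L \setminus K} \mathbf{x}_u \leq C_1 \epsilon \sqrt{(r-2)(n-r+2)}$ on the $L$ part, and using $\mathbf{x}_u \leq \epsilon$ on the $S$ part, yields an upper bound of order $O(\epsilon\sqrt{n}) + O(\epsilon^2 n)$. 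Dividing by $\lambda = \Omega(\sqrt{n})$ gives $\mathbf{x}_{v_0} = O(\epsilon) + O(\epsilon^2\sqrt{n})$. For $v_0 \in R$, an analogous estimate holds after using Lemma \ref{K_r bipartite bound} on the bipartite graph between $\{v_0\} \cup R$ and $K \cup T$ to bound $d(v_0)$.

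The main obstacle is the term $O(\epsilon^2 \sqrt{n})$, which grows with $n$ for any fixed $\epsilon$ and so by itself does not give the desired constant bound. I plan to absorb it by a second application of the eigenvalue equation summed over $u \in R$, using that $G_r[R]$ is itself $K_r$-minor-free and hence has average degree less than $c$. This refinement should replace $\sum_{u \in R} \mathbf{x}_u$ by an expression of order $O(\epsilon \sqrt{n})$ with explicit coefficient. Combined with choosing $\epsilon$ as a suitably small constant (depending on $r$ and $c$ but not on $n$) and taking $n$ sufficiently large, the bound tightens to $\mathbf{x}_{v_0} < 1/(2c)$, contradicting the original assumption and completing the proof.
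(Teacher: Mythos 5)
Your overall strategy is the right one (and is essentially the paper's): bound $\sum_{u\in R}\mathbf{x}_u$ by summing the eigenvalue equation over $R$, then feed that back into the eigenvalue equation at a single vertex. But as written there is a genuine gap, concentrated in two places, both traceable to one missing observation. The paper's proof opens with the structural fact that a vertex of $R$ can have \emph{at most one} neighbor in $T$ (two such neighbors $u,w\in T$ together with the clique $K$ would give a $K_r$ minor by contracting the path through the $R$-vertex), and similarly every vertex of $V(G_r)\setminus K$ has at most $r-2$ neighbors in $K\cup T$. You prove that $T$ is independent, which handles the case $v_0\in T$, but for $v_0\in R$ you propose instead to bound $d(v_0)$ via Lemma \ref{K_r bipartite bound} applied to the bipartite graph between $\{v_0\}\cup R$ and $K\cup T$. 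That lemma only bounds the \emph{total} number of edges (by roughly $(r-2)n$), so it says nothing about the degree of a single vertex: a priori $v_0\in R$ could be adjacent to $\Theta(n)$ vertices of $T\cap S$, contributing up to $\Theta(\epsilon n)$ to $\lambda\mathbf{x}_{v_0}$ and hence only $\mathbf{x}_{v_0}=O(\epsilon\sqrt n)$ after dividing by $\lambda$ --- not a constant bound. The same omission infects your ``absorption'' step: bounding $\lambda\sum_{u\in R}\mathbf{x}_u$ using only that $G_r[R]$ has average degree below $c$ controls the edges inside $R$, but the edges from $R$ to $T$ must also be accounted for, and again the clean way to do this is the at-most-one-neighbor-in-$T$ fact (one can also patch it with entry bounds on $T\cap S$ and the bipartite lemma against $T\cap L$, but you do not do this, and you leave the step at the level of ``should replace'').

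Once that one observation is in hand, the whole two-stage detour through the $O(\epsilon^2 n)$ term is unnecessary and the argument collapses to the paper's short proof: every $u\in R$ has at most $r-3$ neighbors in $K$ and at most one in $T$, so $\lambda\sum_{u\in R}\mathbf{x}_u\le 2e(R)+(r-2)|R|=O(\delta n)$, giving $\sum_{u\in R}\mathbf{x}_u=O(\delta\sqrt n)$; then for any $v\notin K$, $\lambda\mathbf{x}_v\le (r-2)+\sum_{u\in R}\mathbf{x}_u$, and dividing by $\lambda=\Omega(\sqrt n)$ and taking $\epsilon$ (hence $\delta$) small yields $\mathbf{x}_v<\frac{1}{2c}$. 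So the gap is fixable, but the fix is precisely the structural step your write-up skips, and the substitute you offer for it does not work.
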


\begin{proof}
Note that any vertex in $R$ can be adjacent to at most one vertex in $T$, otherwise there is a $K_r$ minor. By definition of $R$, each vertex in $R$ can be adjacent to at most $r-3$ vertices in $K$. First we give a bound on the sum of the eigenvector entries in $R$ and we use this to give a bound on each eigenvector entry.
\[
\lambda\sum_{u\in R} \mathbf{x}_u  = \sum_{u\in R}\sum_{v\sim u} \mathbf{x}_v \leq 2e(R) + (r-2)|R| = \delta O(n).
\]
That is
\[
\sum_{u\in R} \mathbf{x}_u = \delta O(\sqrt{n}).
\]
Now let $v\in V(G_r)\setminus K$. Again, note that $v$ can have at most $r-2$ neighbors in $K\cup T$. Therefore
\[
\lambda \mathbf{x}_v = \sum_{w\sim v} \mathbf{x}_w \leq r-2 + \sum_{w\in R} \mathbf{x}_w = r-2 + \delta O(\sqrt{n}).
\]
Dividing by $\lambda$ and choosing $\epsilon$ small enough to make $\delta$ small enough gives the result.
\end{proof}

Finally, we complete the proof of Theorem \ref{K_r}:

\begin{lemma}\label{B empty}
$R$ is empty.
\end{lemma}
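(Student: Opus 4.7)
The plan is to argue by contradiction: assuming $R \neq \emptyset$, I would pick a specific vertex $v \in R$ and construct a graph $G'$ on the same vertex set that is still $K_r$-minor-free but satisfies $\lambda_1(G') > \lambda$, contradicting the extremality of $G_r$. The modification will be to delete every edge of $G_r$ incident to $v$ and then add every missing edge from $v$ to $K$, so that $N_{G'}(v) = K$ exactly. Because the induced subgraph $G_r[R]$ is itself $K_r$-minor-free, it has at most $C|R|$ edges for some constant $C = C(r)$; by averaging I would choose $v \in R$ with $d_R(v) := |N_{G_r}(v) \cap R| \leq 2C$, and this specific choice of $v$ will be crucial in the spectral estimate.

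Next I would check that $G'$ has no $K_r$ minor. Given any would-be minor model $B_1,\dots,B_r$ in $G'$, the case $v \notin \bigcup_i B_i$ reduces to the model living in $G' - v = G_r - v$, contradicting minor-freeness of $G_r$. Otherwise, say $v \in B_1$. The possibility $B_1 = \{v\}$ is ruled out because the other $r-1$ branch sets would each have to meet $N_{G'}(v) = K$ in a distinct vertex, while $|K| = r-2$. If $B_1 \supsetneq \{v\}$, then every component of $G'[B_1 \setminus \{v\}]$ must contain a vertex of $K$ (otherwise it would be disconnected from $v$ in $G'[B_1]$), and these $K$-vertices are pairwise adjacent in the clique $K$, so $B_1 \setminus \{v\}$ is connected in $G_r - v$. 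Any cross-edge used between $B_1$ and $B_i$ that involves $v$ necessarily terminates in some $k' \in K \cap B_i$, and it can be swapped for the $G_r$-edge $kk'$, where $k \in (B_1 \setminus \{v\}) \cap K$ is nonempty since $v$'s neighbor in $B_1$ lies in $K$. Hence the model pulls back to a $K_r$ minor in $G_r$, a contradiction.

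The main work is showing $\lambda_1(G') > \lambda$. Testing $A(G')$ against the Perron vector $\mathbf{x}$ of $G_r$ and invoking the eigenvector equation $\lambda \mathbf{x}_v = \sum_{w \sim v} \mathbf{x}_w$ collapses the change in the Rayleigh numerator to
\[
\mathbf{x}^T A(G') \mathbf{x} - \mathbf{x}^T A(G_r) \mathbf{x} = 2 \mathbf{x}_v \Bigl( \sum_{k \in K} \mathbf{x}_k - \lambda \mathbf{x}_v \Bigr),
\]
reducing the problem to proving $\sum_{k \in K} \mathbf{x}_k > \lambda \mathbf{x}_v$. The lower bound $\sum_{k \in K} \mathbf{x}_k \geq (r-2)(1-\delta)$ is immediate from the construction of $K$ via Lemma \ref{more vertices of large degree}. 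For the upper bound I would split $\lambda \mathbf{x}_v = \sum_{w \sim v} \mathbf{x}_w$ by the $K$-$T$-$R$ partition: at most $r-3$ neighbors in $K$ (since $v \in R$) contributing at most $r-3$ using $\mathbf{x}_k \leq 1$, at most one neighbor in $T$, and at most $2C$ neighbors in $R$, with the latter two categories bounded by $\mathbf{x}_w \leq 1/(2c)$ from Lemma \ref{upper bound small entries}. This yields $\lambda \mathbf{x}_v \leq (r-3) + (1 + 2C)/(2c)$. The crux is that the edge-density constant $C$ for $K_r$-minor-free graphs and the minor-forcing average-degree constant $c$ are tied by $2C \leq c$, so $(1+2C)/(2c) < 1$ whenever $c > 1$, which holds for all $r \geq 3$. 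Choosing $\epsilon$ (hence $\delta$) small enough then yields $\sum_{k \in K} \mathbf{x}_k > \lambda \mathbf{x}_v$ strictly, whence $\lambda_1(G') \geq \mathbf{x}^T A(G') \mathbf{x} / \|\mathbf{x}\|^2 > \lambda$, contradicting the extremality of $G_r$.
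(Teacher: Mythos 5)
Your proposal is correct and follows essentially the same route as the paper: pick a vertex $v\in R$ of bounded degree in $R$, rewire it so that its neighborhood is exactly the clique $K$ (which cannot create a $K_r$ minor), and compare Rayleigh quotients with the Perron vector, using the bound $\mathbf{x}_w < 1/(2c)$ from Lemma \ref{upper bound small entries} to get a strict increase in spectral radius, contradicting extremality. The only differences are cosmetic: you verify the minor-freeness of the rewired graph in detail where the paper simply asserts it, and you bound $\sum_{k\in K}\mathbf{x}_k - \lambda\mathbf{x}_v$ directly rather than isolating a single non-neighbor $u\in K$ as the paper does.
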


\begin{proof}
By way of contradiction, assume that $R$ is not empty. Then there is a vertex $v$ in $R$ with at most $c$ neighbors in $R$. $v$ can be adjacent to at most $1$ vertex in $T$ and at most $r-3$ vertices in $K$. Let $u$ be a vertex in $K$ which is not adjacent to $v$. Now let $H$ be the graph obtained from $G_r$ by removing all edges incident with $v$ and then connecting $v$ to each vertex in $K$. Since $K$ induces a clique, the graph $H$ has no $K_r$ minor. Let $B$ be the adjacency matrix of $H$ and let $\mu$ be its spectral radius. Now
\[
\mu - \lambda \geq \frac{\mathbf{x}^TB\mathbf{x}}{\mathbf{x}^T\mathbf{x}} - \frac{\mathbf{x}^TA\mathbf{x}}{\mathbf{x}^T\mathbf{x}} \geq \frac{2\mathbf{x}_v}{\mathbf{x}^T\mathbf{x}} \left( \mathbf{x}_u - \sum_{\substack{vw\in E(G_r) \\ w\not\in K} }\mathbf{x}_w\right) \geq \frac{2\mathbf{x}_v}{\mathbf{x}^T\mathbf{x}} \left(\mathbf{x}_u - \frac{c+1}{2c}\right),
\]
where the last inequality follows by Lemma \ref{upper bound small entries}. Choosing $\epsilon$ small enough so that $1-\delta > \frac{c+1}{c}$ gives $H$ is a $K_r$ minor-free graph with larger spectral radius than $G_r$, a contradiction.
\end{proof}

\section{Graphs with no $K_{s,t}$ minor}\label{section K_s,t}
Let $2\leq s\leq t$ and $G_{s,t}$ be a graph on $n$ vertices with no $K_{s,t}$ minor such that the spectral radius of its adjacency matrix is at least as large as the spectral radius of the adjacency matrix of any other $n$-vertex graph with no $K_{s,t}$ minor. Throughout this section, $A$ will denote the adjacency matrix of $G_{s,t}$ and $\lambda$ its spectral radius. $\mathbf{x}$ will be the eigenvector for $\lambda$ normalized to have infinity norm equal to $1$ and $z$ will be a vertex such that $\mathbf{x}_z = 1$. First we will show that for $n$ large enough, $G_{s,t}$ is a subgraph of the join of $K_{s-1}$ and an independent set of size $n-s+1$. We omit the proof of the following proposition as it is similar to the proofs of Lemmas \ref{lower bound on lambda}--\ref{upper bound small entries}.

\begin{proposition}\label{K_s,t prop}
For any $\delta>0$, if $n$ is large enough, then $G_{s,t}$ contains a set $K$ of $s-1$ vertices which have common neighborhood of size at least $(1-\delta)n$ and each of which has eigenvector entry at least $1-\delta$. Further, for any vertex $u\in V(G_{s,t})\setminus K$, we have 
\[
\mathbf{x}_u < \frac{(1-\delta)}{c(s-1)(s-2)}
\]
where $c$ is chosen so that any graph of average degree $c$ has a $K_{s,t}$ minor.
\end{proposition}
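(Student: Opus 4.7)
The plan is to mirror exactly the chain of lemmas \ref{lower bound on lambda}--\ref{upper bound small entries}, replacing the $K_r$-minor bipartite bound (Lemma \ref{K_r bipartite bound}) by its $K_{s,t}$ analogue (Lemma \ref{K_s,t bipartite bound}), and substituting $s-1$ for $r-2$ throughout. I would begin by establishing $\lambda = \Theta(\sqrt{n})$: the join of $K_{s-1}$ with $\lfloor (n-s+1)/t\rfloor$ disjoint copies of $K_t$ is $K_{s,t}$-minor-free, so by extremality $\lambda \geq \sqrt{(s-1)(n-s+1)}$; the upper bound $\lambda = O(\sqrt n)$ follows from $e(G_{s,t}) = O(n)$ via $2e(G_{s,t}) = \sum \lambda_i^2$.

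Next, partition $V(G_{s,t}) = L \sqcup S$ according to whether $\mathbf{x}_v > \epsilon$. Summing $\lambda \mathbf{x}_u$ over $u \in L$ and using $\mathbf{x}_v \leq 1$ gives $|L| = O(\sqrt{n})$, so Lemma \ref{K_s,t bipartite bound} applied to the bipartite subgraph between $S$ and $L$ yields $e(S,L) \leq (s-1+\epsilon)n$. Substituting into the eigenvalue equation summed over $S$ and over $L$ separately gives, in the style of Lemma \ref{S and L entries bound},
\[
\sum_{u\in S}\mathbf{x}_u \leq (1+C_1\epsilon)\sqrt{(s-1)(n-s+1)}, \qquad \sum_{u\in L}\mathbf{x}_u \leq C_1\epsilon\sqrt{(s-1)(n-s+1)}.
\]
Subtracting these from $\lambda \mathbf{x}_u \leq \sum_{v\in L}\mathbf{x}_v + \sum_{v \in S, v \sim u}\mathbf{x}_v$ gives the master inequality
\[
\sum_{\substack{v\not\sim u\\ v\in S}}\mathbf{x}_v \leq (1+2C_1\epsilon - \mathbf{x}_u)\sqrt{(s-1)(n-s+1)}.
\]
A lower bound $\mathbf{x}_u \geq 1/(C_2\sqrt{n})$ on every entry is proved exactly as in Claim \ref{lower bound eigenvector entry}: deleting all edges at $u$ and adding the single edge $uz$ cannot create a $K_{s,t}$ minor (a leaf has degree $1$), so the Rayleigh-quotient switching argument gives a contradiction to extremality when $\mathbf{x}_u$ is too small. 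Combining the two bounds yields the analogue of Lemma \ref{lemma non-neighbors in S}: if $\mathbf{x}_u \geq 1-\eta$ then $u$ misses at most $O((\eta+\epsilon)n)$ vertices of $S$.

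Now iterate as in Lemma \ref{more vertices of large degree}. Starting from $v_1 = z$, given $K' = \{v_1,\dots,v_k\}$ with $k \leq s-2$, expand $\lambda^2 \geq (s-1)(n-s+1)$ through the walk-of-length-two decomposition and use Lemma \ref{K_s,t bipartite bound} to bound the contribution of edges in $E(S,L)$ incident to $K'$; an averaging argument produces $v_{k+1} \in L\setminus K'$ with eigenvector entry $\geq 1-C_4(\eta+\epsilon)$, whose degree is then $(1-C_4(\eta+\epsilon))n$ by the master inequality. After $s-1$ rounds we obtain the desired set $K$ with common neighborhood $T$ of size at least $(1-\delta)n$, proving the first half of the proposition. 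Finally, for $u \in V(G_{s,t}) \setminus K$ write $R = V \setminus (K \cup T)$; since $K \cup \{u\}$ together with $t$ common neighbors in $T$ would yield a $K_{s,t}$ subgraph, $u$ has at most $t-1$ neighbors in $T$ and at most $s-1$ in $K$. Summing the eigenvalue equation over $R$ and using that the average degree of $G_{s,t}$ is $O(1)$ gives $\sum_{w\in R}\mathbf{x}_w = \delta\cdot O(\sqrt{n})$, hence
\[
\lambda \mathbf{x}_u \leq (s-1)+(t-1)\epsilon + \sum_{w\in R}\mathbf{x}_w = (s-1)+\delta\cdot O(\sqrt n),
\]
which, divided by $\lambda = \Omega(\sqrt{n})$, gives the required bound $\mathbf{x}_u < (1-\delta)/(c(s-1)(s-2))$ once $\epsilon$ is chosen small enough.

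The only real obstacle is bookkeeping in the iterative step: one must ensure, at each of the $s-1$ rounds, that the edges in $E(S,L)$ already accounted for by the previously selected $v_1,\dots,v_k$ are subtracted correctly in Lemma \ref{K_s,t bipartite bound}, so that the averaging still produces a new vertex whose eigenvector entry is within a controlled factor of $1$; everything else is a mechanical transcription of the $K_r$-case proof with the bipartite-minor bound swapped in.
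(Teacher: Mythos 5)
Your proposal is correct and is essentially the paper's own (omitted) proof: the paper explicitly states that Proposition \ref{K_s,t prop} follows by repeating Lemmas \ref{lower bound on lambda}--\ref{upper bound small entries} with Lemma \ref{K_s,t bipartite bound} in place of Lemma \ref{K_r bipartite bound} and $s-1$ in place of $r-2$, which is exactly what you do, including the correct observations that a pendant edge cannot create a $K_{s,t}$ minor and that a vertex outside $K$ has at most $t-1$ neighbors in $T$ and at most $s-1$ (indeed $s-2$) in $K$. Two cosmetic points only: the $T$-neighbors of $u$ need not lie in $S$, so bound their entries by $1$ rather than $\epsilon$ (still $O(1)$, so nothing changes), and the estimate $\sum_{w\in R}\mathbf{x}_w = \delta\cdot O(\sqrt n)$ should be justified by $e(R)=O(|R|)=O(\delta n)$ (minor-freeness is hereditary) together with $|R|\leq \delta n$, not merely by the average degree of $G_{s,t}$.
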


Let $T$ be the common neighborhood of $K$ and $R = V(G_{s,t}) \setminus (T\cup K)$. First we show that $K$ induces a clique and then we show that $R$ is empty. 

\begin{lemma}\label{K clique}
$K$ induces a clique.
\end{lemma}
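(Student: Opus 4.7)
The plan is to argue by contradiction: suppose at least one edge is missing from $G_{s,t}[K]$. I would apply Theorem \ref{K_s,t clique} to $G_{s,t}$ with $\mathcal{H}=\{K_{s,t}\}$ and the sets $K, T$ furnished by Proposition \ref{K_s,t prop}, where $c$ is a constant such that any graph of average degree at least $c$ contains $K_{s,t}$ as a minor (Mader's theorem). This produces a set $E_0$ of at most $\frac{c(s-1)(s-2)}{2(1-\delta)}$ edges, each incident to a specified $(s-1)(s-2)/2$-element subset $M\subseteq T$, such that the graph $H$ obtained from $G_{s,t}$ by first deleting $E_0$ and then joining every non-adjacent pair in $K$ is still $K_{s,t}$ minor-free.

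Let $B$ be the adjacency matrix of $H$ and $\mu$ its spectral radius. Estimating with the Rayleigh quotient applied to $\mathbf{x}$,
\[
\mu - \lambda \;\geq\; \frac{\mathbf{x}^T(B-A)\mathbf{x}}{\mathbf{x}^T\mathbf{x}} \;=\; \frac{2}{\mathbf{x}^T\mathbf{x}}\Bigl(\sum_{uv\text{ added}}\mathbf{x}_u\mathbf{x}_v \;-\; \sum_{uv\in E_0}\mathbf{x}_u\mathbf{x}_v\Bigr).
\]
Each added edge joins two vertices of $K$, each of eigenvector entry at least $1-\delta$ by Proposition \ref{K_s,t prop}, and there is at least one such edge, so the positive sum is at least $(1-\delta)^2$. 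Each edge of $E_0$ has an endpoint in $M\subseteq V(G_{s,t})\setminus K$, whose entry is less than $\frac{1-\delta}{c(s-1)(s-2)}$ by Proposition \ref{K_s,t prop}, while the other endpoint contributes at most $1$. Hence the negative sum is at most
\[
\frac{c(s-1)(s-2)}{2(1-\delta)}\cdot\frac{1-\delta}{c(s-1)(s-2)} \;=\; \frac{1}{2}.
\]
Combining these two bounds gives $\mu-\lambda \geq \frac{2((1-\delta)^2 - 1/2)}{\mathbf{x}^T\mathbf{x}}$, which is strictly positive as long as $\delta < 1 - 1/\sqrt{2}$. Since Proposition \ref{K_s,t prop} lets us take $\delta$ arbitrarily small (by choosing $\epsilon$ small and $n$ large), this yields $\mu>\lambda$, contradicting the extremality of $G_{s,t}$ among $K_{s,t}$ minor-free graphs on $n$ vertices.

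The only non-routine point is the matching of constants: the factor $c(s-1)(s-2)$ appearing in the denominator of the eigenvector-entry bound in Proposition \ref{K_s,t prop} is designed exactly to cancel the corresponding numerator in the edge-count bound of Theorem \ref{K_s,t clique}, so the total weight of deleted edges is bounded by an absolute constant independent of $s$, $t$, and $n$. Once this cancellation is observed, the argument reduces to the single Rayleigh-quotient comparison above. The main bookkeeping hazard is simply the order of quantifiers: one must fix $\delta$ small enough that $(1-\delta)^2 > 1/2$ first, and only then invoke Proposition \ref{K_s,t prop} with $\epsilon$ and $n$ chosen to realize that $\delta$.
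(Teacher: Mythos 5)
Your proposal is correct and follows essentially the same route as the paper: invoke Theorem \ref{K_s,t clique} to get a bounded set of deletable edges each incident to a low-eigenvector-entry vertex of $T$, then compare spectral radii via the Rayleigh quotient with $\mathbf{x}$, using that the gained term is at least $(1-\delta)^2$ while the lost terms total at most $1/2$. The constant cancellation you highlight is exactly the paper's bound $\sum_{wy\in E_1}\mathbf{x}_w\mathbf{x}_y \leq C\cdot\frac{1}{2C}=\frac{1}{2}$, so the argument matches in all essentials.
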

\begin{proof}
Assume that there are vertices $u,v\in K$ such that $u\not\sim v$. Now, Theorem \ref{K_s,t clique} guarantees that there is a set of at most $C:= \frac{c(s-1)(s-2)}{2(1-\delta)}$ edges such that we can delete these edges, make $K$ a clique, and the resulting graph will have no $K_{s,t}$ minor. Call this set of at most $C$ edges $E_1$ and call the resulting graph $H$. Let $B$ be the adjacency matrix of $H$ and $\mu$ the spectral radius of $B$. Note that all edges in $E_1$ have at least one endpoint with eigenvector entry less than $\frac{1}{2C}$ by Proposition \ref{K_s,t prop}. Then
\[
\mu - \lambda \geq \frac{2}{\mathbf{x}^T\mathbf{x}}\left( \mathbf{x}_u\mathbf{x}_v - \sum_{wy\in E_1} \mathbf{x}_w\mathbf{x}_y \right) \geq \frac{2}{\mathbf{x}^T\mathbf{x}}\left((1-\delta)^2 - C\cdot \frac{1}{2C}\right).
\]
Choosing $\delta$ small enough that $(1-\delta)^2 > 1/2$ yields $\mu > \lambda$, a contradiction. So $K$ must induce a clique.

\end{proof}

\begin{lemma}
$R$ is empty.
\end{lemma}
\begin{proof}
The proof is similar to the proof of Lemma \ref{B empty}, noting that adding a vertex adjacent to a clique of size $s-1$ to a graph with no $K_{s,t}$ minor cannot create a $K_{s,t}$ minor.
\end{proof}

So we have that the vertices of $K$ have degree $n-1$ in $G_{s,t}$. We now consider the graph induced by $V(G_{s,t}) \setminus K$. Note that if any vertex in this induced graph has $t$ neighbors, this creates a $K_{s,t}$ in $G_{s,t}$. Therefore the graph induced by $V(G_{s,t}\setminus K)$ has maximum degree at most $t-1$. 

We need an interlacing result. We comment that many times interlacing theorems are used to give a lower bound on the spectral radius of a graph via eigenvalues of either a subgraph of the graph or a quotient matrix formed from the graph. This theorem gives an {\em upper} bound on the spectral radius of a graph based on the eigenvalues of a quotient-like matrix.

\begin{theorem}\label{interlacing}
Let $H_1$ be a $d$-regular graph on $n_1$ vertices and $H_2$ be a graph with maximum degree $k$ on $n_2$ vertices. Let $H$ be the join of $H_1$ and $H_2$. Define
\[
B: = \begin{bmatrix}
d & n_2 \\
n_1 & k
\end{bmatrix}
\]
Then $\lambda_1(H) \leq \lambda_1(B)$ with equality if and only if $H_2$ is $k$-regular.
\end{theorem}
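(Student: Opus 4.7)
The plan is to reduce the bound to a Perron--Frobenius comparison on the nonnegative $2 \times 2$ matrix $B$ by averaging the Perron eigenvector of $H$ on each side of the join. Since $H$ is connected (it is a join with $n_1, n_2 \geq 1$), let $\mathbf{y} > 0$ be the Perron eigenvector of $A(H)$ for $\lambda := \lambda_1(H)$, and set $\alpha_i := \frac{1}{n_i} \sum_{v \in V(H_i)} \mathbf{y}_v$ for $i = 1, 2$. Sum the eigenvalue equation $\lambda \mathbf{y}_u = \sum_{v \sim u} \mathbf{y}_v$ over $u \in V(H_1)$: the within-$H_1$ contribution equals $\sum_{u \in V(H_1)} d_{H_1}(u) \mathbf{y}_u = d\, n_1 \alpha_1$ by $d$-regularity, and the cross contribution is $n_1 \cdot n_2 \alpha_2$ because every vertex of $V(H_1)$ is adjacent to every vertex of $V(H_2)$; dividing by $n_1$ yields $\lambda \alpha_1 = d \alpha_1 + n_2 \alpha_2$. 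The analogous sum over $u \in V(H_2)$ gives a within-$H_2$ contribution $\sum_{v \in V(H_2)} d_{H_2}(v) \mathbf{y}_v \leq k\, n_2 \alpha_2$ (with equality if and only if $H_2$ is $k$-regular, since $\mathbf{y} > 0$) plus $n_2 \cdot n_1 \alpha_1$ from the cross edges; dividing by $n_2$ yields $\lambda \alpha_2 \leq k \alpha_2 + n_1 \alpha_1$.

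In matrix form, these two relations say $B\mathbf{v} \geq \lambda \mathbf{v}$ coordinatewise for the strictly positive vector $\mathbf{v} = (\alpha_1, \alpha_2)^T$. A standard nonnegative-matrix comparison now finishes the inequality: pairing on the left with a positive left Perron eigenvector $\mathbf{u}$ of $B$ (which is irreducible, as its off-diagonal entries $n_1, n_2$ are strictly positive), one gets
\[
\lambda_1(B)\, \mathbf{u}^T \mathbf{v} \;=\; \mathbf{u}^T B \mathbf{v} \;\geq\; \lambda\, \mathbf{u}^T \mathbf{v},
\]
and cancelling the strictly positive scalar $\mathbf{u}^T \mathbf{v}$ gives $\lambda \leq \lambda_1(B)$.

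For the equality characterization I would argue both directions. If $H_2$ is $k$-regular, take a positive Perron eigenvector $(c_1, c_2)^T$ of $B$ and extend it to the ``two-block constant'' vector on $V(H)$ equal to $c_1$ on $V(H_1)$ and $c_2$ on $V(H_2)$; a direct check using the regularity of $H_1$ and $H_2$ shows this is a positive eigenvector of $A(H)$ with eigenvalue $\lambda_1(B)$, forcing $\lambda_1(H) \geq \lambda_1(B)$ and hence equality. Conversely, $\lambda_1(H) = \lambda_1(B)$ forces equality in the degree-sum bound for $V(H_2)$ above, and since $\mathbf{y}$ is strictly positive this means $d_{H_2}(v) = k$ for every $v \in V(H_2)$, i.e., $H_2$ is $k$-regular. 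The only point requiring a little care is that $\mathbf{y}$ and $\mathbf{u}$ are strictly positive so the coordinatewise inequality lifts to a strict scalar comparison; both follow from irreducibility, so I do not expect a substantial obstacle --- the whole argument reduces to two one-line summations plus the standard Perron--Frobenius comparison.
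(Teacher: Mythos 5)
Your proposal is correct, and it takes a genuinely different route from the paper. The paper argues through the Rayleigh quotient of the full Perron vector: it decomposes $\lambda_1(H)$ into the $H_1$-part, the cross-part, and the $H_2$-part, bounds the internal parts by $d\sum x_i^2$ and $k\sum y_j^2$ (using $\lambda_1(H_2)\leq k$), bounds the cross term by Cauchy--Schwarz and AM--GM by $(n_1+n_2)xy$ with $x^2=\sum x_i^2$, $y^2=\sum y_j^2$, and then compares $dx^2+(n_1+n_2)xy+ky^2$ with $\lambda_1(B)$; equality is handled via $\lambda_1(H_2)<k$ when $H_2$ is not regular and via the equitable-partition/quotient-matrix fact when it is. You instead average the Perron vector over the two sides of the join to get the componentwise inequality $B\mathbf{v}\geq\lambda_1(H)\mathbf{v}$ for a strictly positive $\mathbf{v}$, and close with the standard left-Perron-eigenvector pairing for the irreducible nonnegative matrix $B$; equality then localizes exactly to tightness of the degree-sum bound on $V(H_2)$, and the converse is the explicit two-block positive eigenvector (the same equitable-partition fact the paper invokes). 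Your route buys two things: it treats the nonsymmetric $B$ with genuinely nonsymmetric tools, sidestepping the paper's step $\lambda_1(B)\geq\begin{bmatrix}x & y\end{bmatrix}B\begin{bmatrix}x\\ y\end{bmatrix}$, which for a nonsymmetric matrix is not a generic Rayleigh bound and, after the AM--GM relaxation, really calls for comparison with the symmetric matrix having off-diagonal entries $\sqrt{n_1n_2}$ (the version with $(n_1+n_2)/2$ has spectral radius at least $\lambda_1(B)$, with equality only when $n_1=n_2$); and it is self-contained, needing only connectivity of the join and irreducibility of $B$. What the paper's approach buys is uniformity with the Rayleigh-quotient estimates used throughout the rest of the paper. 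Your one terse spot --- that equality in $\lambda_1(H)=\lambda_1(B)$ forces equality coordinatewise in $B\mathbf{v}\geq\lambda\mathbf{v}$ --- is justified exactly as you indicate, by pairing the nonnegative slack vector against the strictly positive left eigenvector, so there is no gap.
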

\begin{proof}
Let $A(H)$ be the adjacency matrix of $H$. Let 
\[
A(H) \begin{bmatrix}
x_1 \\ \vdots \\ x_{n_1} \\ y_1 \\ \vdots \\ y_{n_2}
\end{bmatrix} = \lambda_1(H)  \begin{bmatrix}
x_1 \\ \vdots \\ x_{n_1} \\ y_1 \\ \vdots \\ y_{n_2}
\end{bmatrix}
\]
where the eigenvector entries labeled by $x$'s correspond to vertices in $H_1$ and those by $y$'s to vertices in $H_2$. Assume that $\begin{smallmatrix}[ x_1 & \hdots & x_{n_1} & y_1 & \hdots & y_{n_2} \end{smallmatrix}]^T$ is normalized to have $2$-norm equal to $1$. Then 
\[
\lambda_1(H) = 2\sum_{ij\in E(H_1)} x_ix_j + 2\sum_{i=1}^{n_1}\sum_{j=1}^{n_2} x_iy_j + 2\sum_{ij\in E(H_2)} y_iy_j.
\]
Now note that 
\[
2\sum_{ij \in E(H_1)} x_ix_j \leq \lambda_1(H_1) \left(\sum_{i=1}^{n_1} x_i^2\right) = d \left(\sum_{i=1}^{n_1} x_i^2\right)
\]
and 
\[
2\sum_{ij\in E(H_2)} y_iy_j \leq \lambda_1(H_2) \left(\sum_{j=1}^{n_2} y_j^2\right) \leq k\left(\sum_{j=1}^{n_2} y_j^2\right).
\]
Two applications of Cauchy-Schwarz and one application of the AM-GM inequality give
\[
2\sum_{i=1}^{n_1}\sum_{j=1}^{n_2} x_iy_j \leq (n_1+n_2)\sqrt{\left(\sum_{i=1}^{n_1} x_i^2\right)}\sqrt{\left(\sum_{j=1}^{n_2} y_j^2\right)}
\]
Let $x^2 = \sum_{i=1}^{n_1} x_i^2$ and $y^2 = \sum_{j=1}^{n_2} y_j^2$. So
\[
\lambda_1(H) \leq dx^2 + (n_1+n_2)xy + ky^2.
\]
On the other hand
\[
\lambda_1(B) \geq \begin{bmatrix} x & y\end{bmatrix} B \begin{bmatrix} x \\ y\end{bmatrix} = dx^2 + (n_1+n_2)xy + ky^2.
\]
Note that if $H_2$ is not $k$-regular, then $\lambda_1(H_2) < k$, and equality cannot occur. On the other hand, if $H_2$ is $k$-regular, then the partition of $V(H)$ into $V(H_1)$ and $V(H_2)$ forms an equitable partition with quotient matrix $B$, implying that $B$ and $A(H)$ have the same spectral radius (cf \cite{Godsil}).
\end{proof}

Now we can finish the proof of Theorem \ref{K_s,t}.

\begin{proof}[Proof of Theorem \ref{K_s,t}]
We now know that $G_{s,t}$ contains as a subgraph the join of a clique of size $s-1$ (namely $K$) and an independent set of size $n-s+1$ and that the graph induced by $V(G_{s,t})\setminus K$ has maximum degree at most $t-1$. Theorem \ref{interlacing} then yields 
\[
\lambda \leq \frac{s+t-3 + \sqrt{(s+t-3)^2 + 4((s-1)(n-s+1) - (s-2)(t-1))}}{2},
\]
with equality if and only if the graph induced by $V(G_{s,t})\setminus K$ is $(t-1)$-regular. It remains to show that equality can hold if and only if $V(G_{s,t})\setminus K$ induces a disjoint union of copies of $K_t$. To accomplish this, we use a trick of Nikiforov \cite{Nikiforov}. Assume that $H$ is a connected component of the graph induced by $V(G_{s,t}\setminus K)$ on $h$ vertices. We may assume that this component is $t-1$ regular and we must show that $h = t$. If $h = t+1$, then any pair of nonadjacent vertices in $H$ have $t-1$ common neighbors. These vertices along with $K$ then form a $K_{s,t}$. 

Now assume that $h\geq t+2$. Since $H$ is dominated by $K$ and $G_{s,t}$ has no $K_{s,t}$ minor, $H$ does not have a $K_{1,t}$ minor. By \cite{ChudnovskyReedSeymour}, since $H$ is connected we have that $|E(H)| \leq h+\frac{1}{2}t(t-3)$, contradicting that $H$ is $(t-1)$-regular. Therefore $H$ must be a $K_t$, and so equality occurs if and only if $V(G_{s,t})\setminus K$ induces the disjoint union of copies of $K_t$ (implying that $n\equiv s-1 \pmod{t}$), completing the proof.
\end{proof}

We note that if $t$ does not divide $n-s+1$, then our proof only implies that the extremal graph is a subgraph of $K_{s-1}$ join an independent set of size $n-s+1$, and that the subgraph induced by the set of size $n-s+1$ has maximum degree $t-1$. We conjecture a similar construction is extremal when $t$ does not divide $n-s+1$.

\begin{conjecture}
Let $2\leq s\leq t$, and let $0\leq p < t$. Let $n = s-1 + kt + p$. For $n$ large enough, the $n$-vertex graph of maximum spectral radius which does not contain $K_{s,t}$ as a minor is the join of $K_{s-1}$ and $(kK_t + K_p)$.
\end{conjecture}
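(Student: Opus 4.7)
The plan is to follow the skeleton of the proof of Theorem \ref{K_s,t}. The reduction through Lemma \ref{K clique} and the $R=\emptyset$ lemma does not use the divisibility of $n-s+1$ by $t$, so for $n$ large we conclude that the extremal graph $G_{s,t}$ is of the form $K_{s-1}\vee H$, where $H$ is an $(n-s+1)=(kt+p)$-vertex graph with $\Delta(H)\leq t-1$ and such that $K_{s-1}\vee H$ has no $K_{s,t}$ minor.

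The new content is to show that the extremal $H$ is a disjoint union of cliques $\bigsqcup_{i=1}^l K_{a_i}$ with each $a_i\leq t$. For a component $C$ with $|V(C)|\leq t$, completing $C$ to $K_{|V(C)|}$ respects the degree bound, and the enlarged local subgraph on $K\cup V(C)$ is contained in $K_{s-1+|V(C)|}$, which is too small to contain a $K_{s,t}$ minor; hence extremality forces $C$ to be a clique. For a component with $|V(C)|>t$, one argues that replacing $C$ by $K_t+K_{|V(C)|-t}$ (iterating if necessary) preserves the constraints and strictly increases $\lambda_1$. Here one must be careful, since the replacement may not always strictly increase the edge count: for $s=2$, $t=3$ one computes $\lambda_1(K_1\vee(K_3+K_1))>\lambda_1(K_1\vee P_4)$ even though both $H$-parts have three edges, so the spectral strict inequality must come from a Perron--Frobenius/eigenvector comparison rather than from a count of edges.

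Once $H=\bigsqcup_i K_{a_i}$, the partition of $V(G)$ into $K$ and the individual component vertex sets is equitable, so $\lambda_1(G)$ equals the Perron root of the $(l+1)\times(l+1)$ quotient matrix
\[
Q(a_1,\ldots,a_l)=\begin{bmatrix} s-2 & a_1 & \cdots & a_l \\ s-1 & a_1-1 & & \\ \vdots & & \ddots & \\ s-1 & & & a_l-1 \end{bmatrix}.
\]
A Schur-complement reduction gives the scalar characteristic equation
\[
\lambda-(s-2)=(s-1)\sum_{i=1}^{l}\frac{a_i}{\lambda-a_i+1}.
\]
For fixed $\lambda>t-1$ the function $f(x)=x/(\lambda-x+1)$ is convex on $[0,t]$ with $f(0)=0$, hence superadditive; combining two parts $a_i,a_j$ with $a_i+a_j\leq t$ therefore strictly increases the right-hand side, while implicit differentiation of the characteristic equation shows that the Perron root is increasing in each $a_i$. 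Together these facts imply that $\lambda_1(Q)$ is maximized over partitions of $kt+p$ into parts of size at most $t$ precisely when as many parts as possible equal $t$, i.e., $(a_1,\ldots,a_l)=(t,\ldots,t,p)$.

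The main obstacle is the second step above: rigorously executing the clique-decomposition of the extremal $H$ with a genuinely strict spectral comparison. For sparse connected components of size $>t$ (such as a long path) the clique-decomposition does not strictly increase the edge count, so the spectral improvement must be extracted from a careful eigenvector analysis of the type used in establishing monotonicity of the Perron root under graph modifications; this is the essentially new ingredient beyond what appears in the proof of Theorem \ref{K_s,t}.
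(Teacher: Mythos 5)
First, a point of orientation: this statement is stated in the paper as a \emph{conjecture}, not a theorem. The paper proves the extremal result only when $t \mid n-s+1$ (Theorem \ref{K_s,t}) and explicitly leaves the general case open, so there is no proof in the paper to compare against; your proposal has to stand on its own, and as written it does not close the problem. Your step 1 is fine: the reduction via Proposition \ref{K_s,t prop}, Lemma \ref{K clique}, and the emptiness of $R$ indeed makes no use of divisibility, so the extremal graph is $K_{s-1}\vee H$ with $\Delta(H)\le t-1$ and $K_{s-1}\vee H$ having no $K_{s,t}$ minor. Your step 3 is also essentially sound: for $H$ a disjoint union of cliques with parts $a_i\le t$ the partition is equitable, the scalar equation $\lambda-(s-2)=(s-1)\sum_i a_i/(\lambda-a_i+1)$ is correct, and strict convexity of $x\mapsto x/(\lambda-x+1)$ on $[0,t]$ (for $\lambda>t-1$, which holds since $\lambda\ge\sqrt{(s-1)(n-s+1)}$) plus monotonicity of the Perron root gives, by a majorization argument, that $(t,\dots,t,p)$ is the unique optimal partition.

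The genuine gap is your step 2, and you name it yourself: nothing in the proposal actually proves that the extremal $H$ is a disjoint union of cliques of size at most $t$. Once $t\nmid n-s+1$, the paper's own tools stop short: Theorem \ref{interlacing} gives equality only for $(t-1)$-regular $H$, and the bound $e(H)\le h+\tfrac12 t(t-3)$ from \cite{ChudnovskyReedSeymour} is used in the paper only to contradict $(t-1)$-regularity of a large component, which is unavailable here. Your proposed surgery $C\mapsto K_t+K_{|C|-t}$ is exactly the step that needs a strict spectral comparison, and no such comparison is supplied: as your own $K_3+K_1$ versus $P_4$ example shows, edge counts do not decide it, and the Rayleigh-quotient trick used throughout the paper (compare $\mathbf{x}^TB\mathbf{x}$ with $\mathbf{x}^TA\mathbf{x}$ for the \emph{old} eigenvector $\mathbf{x}$) fails when the deleted edges can sit on vertices of larger eigenweight than the added ones, which is precisely the situation for a sparse component of size $>t$. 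There is also a secondary hole in the easy half of step 2: to complete a component $C$ with $|C|\le t$ to a clique you argue that $K\cup V(C)$ spans at most $K_{s+t-1}$, but a $K_{s,t}$ minor in the modified graph need not live inside $K\cup V(C)$ — branch sets can pass through $K$ and use several components of $H$ — so minor-freeness after the completion requires an argument (it is plausibly true, but not for the reason given). In short, the proposal is a reasonable program whose decisive middle step is exactly the open content of the conjecture, so it cannot be accepted as a proof.
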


\section{Graphs with $\mu(G) \leq m$}\label{section CdV}
Let $m$ be a positive integer. Let $G_m$ be a graph on $n$ vertices, with $\mu(G_m) \leq m$, which has the largest spectral radius of its adjacency matrix over all $n$-vertex graphs with Colin de Verdi\`ere parameter at most $m$. Throughout this section, $A$ will denote the adjacency matrix of $G_m$, which will have spectral radius $\lambda$. $\mathbf{x}$ will be an eigenvector for $\lambda$ with infinity norm $1$. We will use the following theorem of van der Holst, Lov\'asz, and Schrijver \cite{survey}.

\begin{theorem}\label{join theorem}
Let $G = (V,E)$ be a graph and let $v\in V$. Then
\[
\mu(G) \leq \mu(G - v) + 1.
\]
If $v$ is connected to all other nodes, and $G$ has at least one edge, then equality holds.

\end{theorem}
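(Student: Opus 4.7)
The plan is to prove $\mu(G)\le\mu(G-v)+1$ by restricting a Colin de Verdi\`ere witness for $G$ to $V\setminus v$ and then perturbing to recover the CdV conditions, and to prove the equality case by explicitly extending a witness for $G-v$ to $V$.

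For the inequality, let $M$ realize $\mu(G)$ and write it in block form
\[
M = \begin{pmatrix} \alpha & b^T \\ b & C \end{pmatrix}
\]
with the first row and column corresponding to $v$. Restricting to $V\setminus v$ gives the principal submatrix $C$, which automatically inherits the sign pattern M1 for $G-v$. Cauchy interlacing of $M$ and $C$, together with $M$ having exactly one negative eigenvalue and corank $\mu(G)$, forces $C$ to have at most one negative eigenvalue and corank at least $\mu(G)-1$. If $C$ already satisfies M2 and M3 as a CdV witness for $G-v$, this immediately yields $\mu(G-v)\ge\mu(G)-1$. If $C$ is positive semidefinite (the M2-failure case), subtracting a small $\varepsilon>0$ from a diagonal entry of $C$ at some $u\in V\setminus v$ preserves M1, creates a single small negative eigenvalue, and drops the corank by at most one. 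If $C$ fails the Strong Arnold Property (M3), a perturbation of the original matrix $M$ itself, combined with the fact that CdV witnesses of maximum corank form a generic subset of the sign-pattern variety, produces an alternative witness whose restriction satisfies SAP.

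For the equality case, assume $v$ is adjacent to every other vertex of $G$ and $G$ has at least one edge. Take a CdV witness $M'$ for $G-v$ of corank $m'=\mu(G-v)$. When $G-v$ is connected, the unique negative eigenvalue of $M'$ has a strictly positive Perron--Frobenius eigenvector $u$ (obtained by applying Perron--Frobenius to a large positive shift of $-M'$); because $u$ lies in a non-zero-eigenvalue eigenspace it is orthogonal to $\ker M'$, and hence $u\in\operatorname{im}(M')$. Define
\[
M := \begin{pmatrix} u^T(M')^+u & -u^T \\ -u & M' \end{pmatrix},
\]
where $(M')^+$ is the Moore--Penrose pseudoinverse. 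The $v$-row of $M$ has all strictly negative off-diagonal entries, matching the adjacency of $v$ to every other vertex, so M1 holds. A direct computation shows $\ker M=\{(c,\,c(M')^+u+k):c\in\mathbb{R},\,k\in\ker M'\}$, of dimension $m'+1$. Haynsworth inertia additivity applied with the vanishing Schur complement $u^T(M')^+u-u^T(M')^+u=0$ yields $\operatorname{inertia}(M)=\operatorname{inertia}(M')+(0,1,0)$, so $M$ has exactly one negative eigenvalue and M2 holds. SAP for $M$ reduces to SAP for $M'$: every off-diagonal entry of $M$ in the $v$-row is nonzero, so any symmetric $X$ violating SAP for $M$ must vanish identically on the $v$-row and column, whence its restriction to $V\setminus v$ would violate SAP for $M'$. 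When $G-v$ is disconnected, $u$ fails to be strictly positive; one first modifies the diagonal of $M'$ (this preserves M1 for $G-v$) to obtain a matrix whose image contains a strictly positive vector, and then applies the same Schur-complement construction.

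The principal obstacle is the M3 verification in the inequality direction. A naive zero-extension of a SAP-violator $X'$ for $C$ to an $n\times n$ matrix $\widetilde{X}$ satisfies $M\widetilde{X}=0$ only when $X'b=0$, which need not hold, so the violator cannot be lifted directly; this forces the use of a perturbation/transversality argument at the level of $M$ itself, keeping the corank of the restriction at least $\mu(G)-1$ while restoring SAP. The disconnected-case modification in the equality direction is a secondary technical point.
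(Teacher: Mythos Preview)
The paper does not give its own proof of this theorem; it is quoted as a result of van der Holst, Lov\'asz, and Schrijver and cited to \cite{survey}, so there is no in-paper argument to compare yours against.

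Judged on its own, your proposal has a genuine gap, and you name it yourself: verifying the Strong Arnold Property (M3) for the restricted matrix $C$ in the inequality direction. You correctly observe that a SAP-violator for $C$ cannot be zero-extended to one for $M$ unless $X'b=0$, and you then assert that ``a perturbation/transversality argument at the level of $M$ itself'' repairs this. That sentence is not a proof. SAP for $M$ tells you the corank-$\mu(G)$ stratum of the sign-pattern variety is a manifold near $M$, but nothing you have written forces the \emph{restriction} of a nearby point on that stratum to satisfy SAP for $G-v$; this is precisely the delicate step in the original van der Holst--Lov\'asz--Schrijver argument, and it does not follow from the ingredients you list. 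The M2-failure branch has the same defect: after your rank-one diagonal perturbation you still owe a SAP verification for the perturbed matrix, so you land back in the unresolved case. Your equality direction is closer to complete, but two points need tightening: the appeal to Haynsworth inertia additivity is formally invalid because the pivot block $M'$ is singular (the generalized formula carries correction terms, or you can bypass it with a direct count using the explicit kernel you computed plus interlacing), and the disconnected case is only gestured at --- modifying the diagonal of $M'$ so that its image contains a strictly positive vector must be done while preserving both the single negative eigenvalue and SAP, which is not automatic.
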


The main results of \cite{TaitTobin} show that for $n$ large enough, the outerplanar graph of maximum spectral radius is $K_1$ join $P_{n-1}$ and the planar graph of maximum spectral radius is $K_2$ join $P_{n-2}$. Since a graph $G$ is outerplanar if and only if $\mu(G) \leq 2$ and planar if and only if $\mu(G) \leq 3$, Theorem \ref{CdV} is proved if $m\in \{2,3\}$, and we will from now on assume $m\geq 4$. Since $K_2$ join $P_{n-2}$ is planar, Theorem \ref{join theorem} implies that the join of $K_{m-1}$ and $P_{n-m+1}$ has Colin de Verdi\`ere parameter equal to $m$ for any $m$. 

We also note that since $\mu(K_{m, m}) = m+1$ (cf \cite{survey}), our graph $G_m$ cannot contain $K_{m,m}$ as a minor. We omit the proof of the following Proposition, as it is similar to the proofs Lemmas \ref{lower bound on lambda}--\ref{upper bound small entries}.

\begin{proposition}
For any $\delta > 0$, if $n$ is large enough, then $G_m$ contains a set $K$ of $m-1$ vertices which have a common neighborhood of size at least $(1-\delta)n$ and each of which has eigenvector entry at least $1-\delta$. Further, for any vertex $u\in V(G_m)\setminus K$, we have 
\[
\mathbf{x}_u < \frac{1-\delta}{c(s-1)(s-2)}
\]
where $c$ is chosen so that any graph of average degree $c$ has Colin de Verdi\`ere parameter at least $m+1$.
\end{proposition}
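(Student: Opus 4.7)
The plan is to mimic the chain Lemma~\ref{lower bound on lambda} through Lemma~\ref{upper bound small entries} of Section~\ref{section K_r} essentially verbatim, with two structural substitutions: the forbidden minor $K_r$ is replaced by $K_{m,m}$, and the coefficient $r-2$ is replaced by $m-1$ throughout. This is where the hypothesis $\mu(G_m)\leq m$ does its work, since $\mu(K_{m,m})=m+1$ forces $G_m$ to be $K_{m,m}$-minor-free. First I would verify the a priori bounds on $\lambda$: the graph $K_{m-1}\vee P_{n-m+1}$ has $\mu$-parameter exactly $m$ by Theorem~\ref{join theorem} and contains $K_{m-1,n-m+1}$ as a subgraph, so $\lambda\geq \sqrt{(m-1)(n-m+1)}$; on the other hand, $\mu(G_m)\leq m$ yields a finite family of excluded minors and hence $e(G_m)=O(n)$, so $\lambda=O(\sqrt n)$. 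Setting $L=\{v:\mathbf{x}_v>\epsilon\}$ and $S=V(G_m)\setminus L$, the proof of Lemma~\ref{size of L} gives $|L|=O(\sqrt n)$ without change.

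Next, Lemma~\ref{K_s,t bipartite bound} applied with $s=t=m$ to the bipartite subgraph between $L$ and $S$ yields the analogue of inequality~\eqref{e(S,L) bound}:
\[
e(S,L)\leq (m-1+\epsilon)n.
\]
Substituting this into the calculations of Lemma~\ref{S and L entries bound} and Lemma~\ref{lemma non-neighbors in S} yields, with identical arithmetic, the bounds on $\sum_{u\in S}\mathbf{x}_u$ and $\sum_{u\in L}\mathbf{x}_u$, together with the statement that any vertex of eigenvector entry at least $1-\delta$ has at least $(1-C_3(\delta+\epsilon))n$ neighbors. Claim~\ref{lower bound eigenvector entry} survives because the pendant-swap operation used there preserves the class $\mu\leq m$: adding a leaf never increases $\mu$, as each of the finitely many forbidden minors for $\mu\leq m$ has minimum degree at least $2$ and therefore contains no leaf.

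I would then iterate the analogue of Lemma~\ref{more vertices of large degree} exactly $m-2$ times, starting from the single vertex $v_1=z$. At each stage, the $A^2$-expansion of the eigenvector equation at $z$, combined with the bipartite bound applied to the edges between $L$ and $S$ that avoid the current heavy set, guarantees the existence of a new vertex with both eigenvector entry and degree close to their maximum values. After $m-2$ iterations this produces the required set $K$ of $m-1$ vertices with common neighborhood $T$ of size at least $(1-\delta)n$ and each eigenvector entry at least $1-\delta$. For the final inequality, I would follow Lemma~\ref{upper bound small entries} with $R=V(G_m)\setminus(K\cup T)$: any vertex of $R$ has only a bounded number of neighbors in $T$ (otherwise $K$ together with that vertex and its $T$-neighbors would force a $K_{m,m}$ minor), summing the eigenvector equation over $R$ gives $\sum_{u\in R}\mathbf{x}_u=\delta\cdot O(\sqrt n)$, and the eigenvector equation at an arbitrary $v\in V(G_m)\setminus K$ then gives the claimed upper bound once $\epsilon$ is chosen sufficiently small.

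The main obstacle is not any individual computation but rather the initial identification of $K_{m,m}$ as the ``correct'' excluded minor: this is what makes the coefficient $m-1$ appear on the right-hand side of the bipartite bound and match the clique size in Theorem~\ref{CdV}. Once that substitution is in place, every subsequent step is a mechanical rewriting of Section~\ref{section K_r}. A secondary delicacy is controlling the cumulative error parameter $\eta$ through all $m-2$ iterations of the extraction step; this requires choosing $\epsilon$ small as a function of $m$, but presents no new difficulty beyond what is already handled in Lemma~\ref{more vertices of large degree}.
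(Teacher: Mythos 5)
Your proposal is essentially the paper's own (omitted) argument: the paper explicitly reduces to the observation that $\mu(K_{m,m})=m+1$ makes $G_m$ $K_{m,m}$-minor-free and that $\mu(G_m)\le m$ gives $e(G_m)\le c_m n$, and then says the proof runs parallel to Lemmas \ref{lower bound on lambda}--\ref{upper bound small entries} with $r-2$ replaced by $m-1$; your substitution of Lemma \ref{K_s,t bipartite bound} with $s=t=m$, the lower bound via $K_{m-1}\vee P_{n-m+1}\supseteq K_{m-1,n-m+1}$, the $m-2$ iterations of the analogue of Lemma \ref{more vertices of large degree}, and the bound on $\sum_{u\in R}\mathbf{x}_u$ using that a vertex of $R$ has at most $m-1$ neighbors in $T$ (else $K\cup\{v\}$ and $m$ common neighbors give a $K_{m,m}$) all match this plan. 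The one point to repair is your justification of the analogue of Claim \ref{lower bound eigenvector entry}: the assertion that ``adding a leaf never increases $\mu$ because every forbidden minor for $\mu\le m$ has minimum degree at least $2$'' is circular (that property of the excluded minors is equivalent to the leaf statement you are trying to prove) and is in fact false without a lower bound on $m$ --- attaching a leaf to the center of $P_3$ produces $K_{1,3}$, raising $\mu$ from $1$ to $2$, so $K_{1,3}$ is an excluded minor for $\mu\le 1$ with leaves. The correct justification, in the spirit of what the paper does for its ``$R$ is empty'' lemma, is that attaching a pendant edge is a $1$-clique sum with $K_2$, and by the clique-sum theorem in \cite{survey} this cannot raise $\mu$ above $\max\{\mu(H),2\}$; since the argument only needs $m\ge 4$ (the cases $m\in\{2,3\}$ being the outerplanar and planar results of \cite{TaitTobin}), the modified graph stays in the class $\mu\le m$ and the extremality contradiction goes through.
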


Let $T$ be the common neighborhood of $K$ and $R = V(G_m) \setminus (T\cup K)$. We show next that $K$ induces a clique and that $R$ is empty.

\begin{lemma} $K$ induces a clique.
\end{lemma}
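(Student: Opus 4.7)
The plan is to mirror the proof of Lemma \ref{K clique} almost verbatim, with Theorem \ref{K_s,t clique} applied to the finite family $\mathcal{H}$ of forbidden minors characterizing $\mu(G)\leq m$. The excerpt records that such a finite family exists and that $\mu(G)\leq m$ forces $e(G) = O(n)$, so there is a constant $c = c(m)$ such that any graph of average degree at least $c$ contains some $H\in\mathcal{H}$ as a minor. In particular, $\mathcal{H}$ plays here exactly the role played by $\{K_{s,t}\}$ in Section \ref{section K_s,t}.

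Assume for contradiction that there are distinct vertices $u,v\in K$ with $u\not\sim v$. Apply Theorem \ref{K_s,t clique} with this $\mathcal{H}$, the set $K$ (of size $m-1$), and the common neighborhood $T$ of size at least $(1-\delta)n$ given by the preceding Proposition. This produces a set $E_1$ of at most
\[
C := \frac{c(m-1)(m-2)}{2(1-\delta)}
\]
edges such that, after deleting $E_1$ and adding every missing edge inside $K$, the resulting graph $H$ has no minor from $\mathcal{H}$. By the forbidden-minor characterization this means $\mu(H)\leq m$, so $H$ is a competitor to $G_m$.

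Now compare spectral radii using the Rayleigh quotient with the eigenvector $\mathbf{x}$ of $A = A(G_m)$. Each edge added inside $K$ is between vertices of eigenvector entry at least $1-\delta$, so its contribution is at least $2(1-\delta)^2$; in particular the edge $uv$ alone already contributes this much. Each edge $wy\in E_1$ has at least one endpoint outside $K$, and by the Proposition that endpoint has eigenvector entry less than $\frac{1-\delta}{c(m-1)(m-2)} = \frac{1}{2C}$, so $\mathbf{x}_w\mathbf{x}_y < \frac{1}{2C}$. Thus, if $B$ denotes the adjacency matrix of $H$ and $\mu$ its spectral radius,
\[
\mu - \lambda \;\geq\; \frac{\mathbf{x}^T B\mathbf{x} - \mathbf{x}^T A \mathbf{x}}{\mathbf{x}^T\mathbf{x}} \;\geq\; \frac{2}{\mathbf{x}^T\mathbf{x}}\left( \mathbf{x}_u\mathbf{x}_v - \sum_{wy\in E_1}\mathbf{x}_w\mathbf{x}_y \right) \;\geq\; \frac{2}{\mathbf{x}^T\mathbf{x}}\!\left((1-\delta)^2 - C\cdot\frac{1}{2C}\right).
\]
Choosing $\delta$ small enough that $(1-\delta)^2 > 1/2$ makes the right-hand side positive, contradicting the extremality of $G_m$. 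Hence $K$ must induce a clique.

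I do not expect a genuine obstacle: the only substantive checks are that the finite forbidden-minor family for $\mu\leq m$ is available (quoted from \cite{survey}), and that the eigenvector-entry bound in the Proposition is exactly the constant $\frac{1}{2C}$ needed to beat the loss coming from $E_1$. Both are already in place from the previous sections, so the proof is essentially a transcription of Lemma \ref{K clique} with $s$ replaced by $m$ and ``$K_{s,t}$ minor'' replaced by ``minor from $\mathcal{H}$''.
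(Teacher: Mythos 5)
Your proposal is correct and matches the paper's approach: the paper proves this lemma by noting that $\mu(G)\leq m$ is characterized by a finite family of excluded minors and then repeating the argument of Lemma \ref{K clique} via Theorem \ref{K_s,t clique}, which is exactly what you have written out in detail (including the correct identification of the bound $\frac{1-\delta}{c(m-1)(m-2)} = \frac{1}{2C}$ and the fact that average degree $c$ forces a forbidden minor). No gaps.
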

\begin{proof} The proof is similar to the proof of Lemma \ref{K clique} once we note that for any integer $m$ the property that $\mu(G) \leq m$ can be characterized by a finite family of excluded minors \cite{survey}.
\end{proof}

\begin{lemma}
$R$ is empty.
\end{lemma}
\begin{proof}
The proof is similar to the proof of Lemma \ref{B empty}, we must only check that if $H$ is a graph with $\mu(H) = m$, then adding a new vertex adjacent to a clique of size $m-1$ does not increase the Colin de Verdi\`ere parameter. But this follows since adding a new vertex adjacent to a clique of size $m-1$ is an $(m-1)$-clique sum (cf \cite{survey}).
\end{proof}

We can now complete the proof of Theorem \ref{CdV}.

\begin{proof}[Proof of Theorem \ref{CdV}]
We now know that $G_m$ contains as a subgraph of the join of $K = K_{m-1}$ and an independent set of size $n-m+1$. Let $H$ be the graph induced by $V(G_m) \setminus K$. 

First we claim that $H$ has maximum degree $2$. In order to see this, we note that $\mu(K_{1,3}) = 2$. This and Theorem \ref{join theorem} imply that the join of $K_{1,3}$ and $K_{m-1}$ has Colin de Verdi\`ere parameter $m+1$. Therefore, this graph cannot be a subgraph of $G_m$, and so $H$ can not have a vertex of degree $3$ or more.

Therefore, $H$ is the disjoint union of paths and cycles. But we now claim that $H$ cannot contain any cycles, as a cycle is a $K_3$ minor. A $K_3$ minor joined to a $K_{m-1}$ is a $K_{m+2}$ minor, which violates $\mu(G_m) \leq m$. 

So now $H$ induces a disjoint union of paths, which means that $G_m$ is a subgraph of the join of $K_{m-1}$ and $P_{n-m+1}$. By the Perron-Frobenius Theorem and maximality of $\lambda$, $G_m$ must be exactly equal to the join of $K_{m-1}$ and $P_{n-m+1}$.
\end{proof}

\section*{Acknowledgements}
We would like to thank Maria Chudnovsky and Paul Seymour; most of the results in section \ref{structural section} were found in collaboration with them.


\begin{thebibliography}{00}
\bibitem{BabaiGuiduli} L.\ Babai and B.\ Guiduli, Spectral extrema for graphs: the Zarankiewicz problem, {\em Electronic J. Combin.} {\bf 16} (2009) R123.

\bibitem{barioli survey} F.\ Barioli, W.\ Barrett, S.\ Fallat, H.\ T.\ Hall, L.\ Hogben, B.\ Shader, P.\ van den Driessche, and H.\ van der Holst, Parameters related to tree-width, zero forcing, and maximum nullity of a graph, {\em Journal of Graph Theory} {\bf 72} (2013) 146--177.

\bibitem{ChudnovskyReedSeymour} M.\ Chudnovsky, B.\ Reed, and P.\ Seymour, The edge-density for $K_{2,t}$ minors, {\em J.\ Combin.\ Theory Ser.\ B} {\bf 101} (2011), 18--46.

\bibitem{Colin1} Y.\ Colin de Verdi\`ere, Sur un nouvel invariant des graphes et un crit\`ere de planarit\'e, {\em J.\ Combin.\ Theory Ser.\ B} {\bf 50} (1990) 11--21.

\bibitem{Godsil} C.\ D.\ Godsil, {\em Algebraic Combinatorics}, Chapman and Hall, New York (1993).

\bibitem{Hoffman} A.\ J.\ Hoffman, On eigenvalues and colorings of graphs, {\em Graph Theory and Its Applications} (Proc. Advanced Sem., Math Research Center, Univ. of Wisconsin, Madision, Wis., 1969), New York.

\bibitem{survey} H.\ van der Holst, L.\ Lov\'asz, and A.\ Schrijver, The Colin de Verdi\`ere graph parameter, {\em Bolyai Soc. Math. Stud.} {\bf 7}, J\'anos Bolyai Math. Soc. Budapest (1999) 29--85.

\bibitem{Hong} Y.\ Hong, Tree-width, clique-minors, and eigenvalues, {\em Discrete Math.} {\bf 274} (2004), 281--287.

\bibitem{bipartite rigidity} G.\ Kalai, E.\ Nevo, and I.\ Novik, Bipartite rigidity, {\em Trans. Amer. Math. Soc.} {\bf 368} (2016) 5515--5545.

\bibitem{Kostochka1} A.\ V.\ Kostochka, The minimum Hadwiger number for graphs with a given mean degree of vertices, {\em Metody Diskret. Anal.} {\bf 38} (1982), 37--58.

\bibitem{Kostochka2} A.\ V.\ Kostochka, Lower bound for the Hadwiger number for graphs by their average degree, {\em Combinatorica} {\bf 4} (1984), 307--316.

\bibitem{KostochkaPrince} A.\ V.\ Kostochka and N.\ Prince, On $K_{s,t}$ minors in graphs of given average degree, {\em Discrete Math.} {\bf 308} (2008), 4435--4445.

\bibitem{LS} L.\ Lov\'asz and A.\ Schrijver, A Borsuk theorem for antipodal links and a spectral characterization of linklessly embeddable graphs, {\em Proc. Amer. Math. Soc.} {\bf 126} (1998) 1275--1285.

\bibitem{Mader} W.\ Mader, Homomorphieeigenschaften und mittlere Kantendichte von Graphen, {\em Math. Ann.} {\bf 174} (1967), 265--268.

\bibitem{Mader2} W.\ Mader, Homomorphies\"atze f\"ur Graphen, {\em Math. Ann.} {\bf 178} (1968), 154--168.

\bibitem{Myers} J.\ S.\ Myers, The extremal function for unbalanced bipartite minors, {\em Discrete Math.} {\bf 271} (2003), 209--222.

\bibitem{Nevo} E.\ Nevo, Embeddability and stresses of graphs, {\em Combinatorica} {\bf 27} (2007) 465--472.

\bibitem{NikiforovKST} V.\ Nikiforov, A contribution to the Zarankiewicz problem, {\em Linear Algebra Appl.} {\bf 432} (2010) 1405--1411.

\bibitem{NikiforovTuran} V.\ Nikiforov, Some inequalities for the largest eigenvalue of a graph, {\em Combinatorics, Probability and Computing} {\bf 11} (2002) 179--189.

\bibitem{Nikiforov} V.\ Nikiforov, The spectral radius of graphs with no $K_{2,t}$ minor, preprint (arXiv:1703.01839).

\bibitem{AlonBoppana} A.\ Nilli, On the second eigenvalue of a graph, {\em Discrete Math.} {\bf 91} (1991) 207--210.

\bibitem{RST} N.\ Robertson, P.\ Seymour, and R.\ Thomas, A survey of linkless embeddings, {\em Graph Structure Theory} (N.\ Robertson, P.\ Seymours, eds.), Contemporary Mathematics, American Mathematical Society, Providence, Rhode Island (1993) 125--136.

\bibitem{Stanley} R.\ P.\ Stanley, A bound on the spectral radius of graphs with $e$ edges, {\em Linear Algebra Appl.} {\bf 87} (1987), 267--269.

\bibitem{TaitTobin} M.\ Tait and J.\ Tobin, Three conjectures in extremal spectral graph theory, {\em J.\ Combin.\ Theory Ser.\ B}, {\bf 126} (2017), 137--161.

\bibitem{Thomason3} A.\ Thomason, Disjoint complete minors and bipartite minors, {\em European Journal of Combinatorics} {\bf 28} (2007), 1779--1783.

\bibitem{Thomason1} A.\ Thomason, The extremal function for contractions of graphs, {\em Math. Proc. Cambridge Phil. Soc.} {\bf 95} (1984), 261--265.

\bibitem{Thomason2} A.\ Thomason, The extremal function for complete minors, {\em J.\ Combin.\ Theory Ser. B} {\bf 81} (2001), 318--338.
\end{thebibliography}
\end{document}